\newtheorem{thm}{Theorem}
\newtheorem{prop}{Proposition}[section]
\newtheorem{defn}[thm]{Definition}
\newtheorem{obs}[thm]{Observation}
\newtheorem{lem}[thm]{Lemma}
\newcommand{\E}{\mathcal{E}}
\newcommand{\G}{\mathcal{G}}
\newcommand{\Hy}{\mathcal{H}}
\newcommand{\N}{\mathcal{N}}
\newcommand{\bP}{\mathbb{P}}
\newcommand{\bE}{\mathbb{E}}
\newcommand{\bN}{\mathbb{N}}
\newcommand{\bfa}{\mathbf{a}}
\newcommand{\bfb}{\mathbf{b}}
\newcommand{\bfg}{\mathbf{g}}
\newcommand{\bfx}{\mathbf{x}}
\newcommand{\bfy}{\mathbf{y}}
\newcommand{\bfzero}{\mathbf{0}}
\newcommand{\X}{\mathcal{X}}
\newcommand{\Y}{\mathcal{Y}}
\newcommand{\ep}{\varepsilon}
\newcommand{\fD}{f^{(D)}}
\newcommand{\Z}{\mathbb{Z}}
\newcommand{\no}{\noindent}
\title{The Weighted Davenport constant of a group and a related extremal problem}
\author{Niranjan Balachandran\footnote{Department of Mathematics, 
Indian Institute of Technology Bombay, Mumbai, India. 
email: niranj (at) math.iitb.ac.in}\ \  and 
Eshita Mazumdar\footnote{Department of Mathematics, Indian 
Institute of Technology Bombay, and Center for Combinatorics, 
Nankai University, Tianjin, China. email: eshitamazumdar (at) yahoo.com} }
\begin{document}
\maketitle
\begin{abstract} 
For a finite abelian group $G$ written additively, and a non-empty subset $A\subset [1,\exp(G)-1]$ the weighted Davenport Constant of $G$ 
with respect to the set $A$, denoted $D_A(G)$, is the least positive integer $k$ for which the following holds: Given an arbitrary $G$-sequence 
$(x_1,\ldots,x_k)$, there exists a non-empty subsequence  $(x_{i_1},\ldots,x_{i_t})$ along with $a_{j}\in A$ such that $\sum_{j=1}^t a_jx_{i_j}=0$. In this paper, we pose and study a natural new extremal problem that arises from the study of $D_A(G)$:  For an integer $k\ge 2$, determine $\fD_G(k):=\min\{|A|: D_A(G)\le k\}$ (if the problem posed makes sense). It turns out that for $k$ `not-too-small', this is a well-posed problem and one of the most interesting cases occurs for $G=\Z_p$, the cyclic group of prime order, for which we obtain near optimal bounds for all $k$ (for sufficiently large primes $p$), and asymptotically tight (up to constants) bounds for $k=2,4$.
\end{abstract}

\textbf{Keywords:}
 Zero-sum problems, Davenport constant of a group.\\

2010 AMS Classification Code:  11B50, 11B75, 05D40.

\section{Introduction} By $[n]$ we shall mean the set $\{1,\ldots,n\}$ and by $[a,b]$ the set $\{a,a+1,\ldots,b\}$ for integers $a\le b$. Throughout this paper, we shall use the Landau asymptotic notation: For functions $f,g$, we write $f(n)=O(g(n))$ if there exists an absolute constant $C>0$ and an integer $n_0$ such that for all $n\ge n_0, |f(n)|\le C|g(n)|$. We write $f=\Omega(g)$ if $g=O(f)$, and we write $f=\Theta(g)$ if $f=O(g)$ and $f=\Omega(g)$. By $f\ll g$ we mean $\frac{f(n)}{g(n)}\to 0$ as $n\to\infty$. We shall also use some of the standard notation from additive combinatorics: For sets $A,B$ subsets of the cyclic group $\Z_n$,  $A+B:=\{a+b:a\in A,b\in B\}$ and $\alpha A=\{\alpha a:a\in A\}$. 

Let $G$ be a finite abelian group written additively. By a 
$G$-sequence of length $k$, we shall mean a sequence 
$(x_1,\ldots,x_k)$ with $x_i\in G$ for each $i$. By a 
\textit{zero-sum} $G$-sequence (or simply, zero-sum sequence) we 
shall mean a $G$-sequence $(x_1,\ldots,x_k)$ such that $\sum_i x_i = 0$, where $0$ is the identity element of $G$. The \textit{Davenport Constant} $D(G)$,  introduced by Rogers \cite{Rog}, is defined as the smallest $k$ such that every $G$-sequence of length $k$ contains a non-trivial zero-sum subsequence. As it turns out, the Davenport constant is an important invariant of the ideal class group of the ring of integers of an algebraic number field (see \cite{halter} for more details). 

A weighted version of the Davenport constant which first appeared 
in a paper by Adhikari \textit{et al} (\cite{ACFKP}), and was later generalized 
by Adhikari and Chen (\cite{AC}), goes as follows. Suppose $G$ is a finite 
abelian group, and let $A\subset\Z\setminus\{0\}$ be a non-empty subset of 
the integers. The weighted Davenport Constant of $G$ 
\textit{with respect to the set} $A$ is the least positive integer $k$ for 
which the following holds: Given an arbitrary $G$-sequence 
$(x_1,\ldots,x_k)$, there exists a non-empty subsequence 
$(x_{i_1},\ldots,x_{i_t})$ along with $a_{j}\in A$ such that 
$\displaystyle\sum_{j=1}^t a_jx_{i_j}=0$. Here we adopt the convention 
that $ax:=\overbrace{x+\cdots+x}^{a\textrm{\ times}}$ for $a$ positive, and $ax:= (-a)(-x)$ for $a$ negative. It is clear that if $G$ has exponent $n$, then one may restrict $A$ to be a subset of $[1,n-1]$. 

As one might expect, the Davenport constant is best understood 
when $G$ is a finite cyclic group. Here are some well-known 
results:
\begin{enumerate}
\item $D_{\pm}(\Z_n)=\lfloor\log_2 n\rfloor +1$. Here our 
notation is a shorthand to denote that the set $A=\{-1,1\}$.
(\cite{ACFKP})
\item $D_A(\Z_n)=2$ if $A=\Z_n\setminus\{0\}$.(\cite{ACFKP})
\item $D_A(\Z_n) = a+1 $ if $A=\Z_n^*$ and $a=\sum_{i=1}^{k}a_i$ 
 for $n = p_1^{a_1}p_2^{a_2}\ldots p_k^{a_k}$. (\cite{Gri})
\item $D_A(\Z_n)=\lceil\frac{n}{r}\rceil $ if $A=\{1,\ldots,r\}$ 
for some $1\le r\le n-1$. (\cite{Adh et al}, \cite{AR})
\end{enumerate}
For other results, see the papers \cite{AR}, \cite{Adetal}.

The focal point of this paper stems from a natural extremal problem in light of the known results  on the Davenport constant of a group. Suppose $G$ is a finite abelian group of exponent $n$, and let $k\ge 2$ be an integer. Define
\begin{eqnarray*}\label{fdG_def} \fD_G(k)&:=&\min\{|A|: \emptyset \neq A\subseteq[1,n-1]\textrm{\ satisfies\ }D_A(G)\le k\},\\ 
                                         &:=&\infty\textrm{\ 
                                         if\ there\ is\ no\ 
                                         such\ }A.\end{eqnarray*}
                                         Here is a natural extremal problem: Given a finite abelian group $G$, determine $\fD_G(k)$ for $k\in\bN$.
                                         
 It is important to note that if $k$ is `too small' relative to the group, then the parameter as defined above is in fact infinite. For instance, consider the group $G=\Z_p^r$ for $p$ prime, and the sequence $\bfx:=(e_1,\ldots,e_r)$ where $e_i=(0,\ldots,0,1,0\ldots,0)$ has a $1$  in the $i^{th}$ coordinate, for $1\le i\le r$. Then for any subset  $A\subset\Z_p^*$ and any sequence $(a_1,a_2,\ldots,a_r)$ with $a_i\in A$ the element $\displaystyle \sum_{i=1}^r a_ie_i=0$ implies $a_i=0$ for each $i$, which implies that $\fD_G(k)=\infty$ for $k\le r$. However, for $k>r$ we do have $\fD_G(k)<\infty$. As it turns out, a consequence of one of our theorems implies that for every group, if $k$ is not too small (this will be more clear when we see the statement of the theorem), then $\fD_G(k)<\infty$, so this is indeed a non-trivial parameter.  For $G=\Z_n$, we shall write $\fD(n,k):= \fD_G(k)$ for convenience. 
 
As it turns out, the  the nature of this extremal problem of determining $\fD_G(k)$ is most interesting for the case when $G$ is a cyclic group of prime order, and in that case, we establish the following bounds.

\begin{thm}\label{fD_prime}
Let $k\in\bN$, $k\ge 2$. There exists an integer $p_0(k)$ and an absolute constant $C=C(k)>0$ such that for all prime $p>p_0(k)$
$$ p^{1/k}-1\le \fD(p,k)\le C(p\log p)^{1/k}.$$\end{thm}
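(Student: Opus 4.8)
I will treat the two bounds separately. It is convenient to reformulate the constraint: writing $A_0 = A\cup\{0\}$, the inequality $D_A(\Z_p)\le k$ is equivalent to saying that for every $\bfx=(x_1,\dots,x_k)\in\Z_p^k$ there is a nonempty $S\subseteq[k]$ and weights $(a_i)_{i\in S}\in A^S$ with $\sum_{i\in S}a_ix_i=0$; equivalently, every ``hyperplane through the origin'' $\bfx^{\perp}$ meets the box $A_0^{\,k}$ in a nonzero point. A sequence $\bfx$ with some $x_i=0$ always has the singleton weighted zero-sum $a_ix_i=0$ (for any $a_i\in A$), so only the $(p-1)^k<p^k$ sequences $\bfx\in(\Z_p^\ast)^k$ matter.

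For the lower bound I would run a first-moment argument. Suppose $|A|=t$ with $(t+1)^k\le p$, and choose $\bfx\in\Z_p^k$ uniformly at random. For each fixed nonempty $S$ and each fixed weight vector $(a_i)_{i\in S}\in A^S$, fixing all but one coordinate of $\bfx$ inside $S$ shows $\bP[\sum_{i\in S}a_ix_i=0]=1/p$, since $\Z_p$ is a field and the $a_i$ are nonzero. The number of such pairs $(S,(a_i))$ is $\sum_{\emptyset\ne S}|A|^{|S|}=(t+1)^k-1$, so by the union bound the probability that $\bfx$ admits some weighted zero-sum subsequence is at most $((t+1)^k-1)/p<1$. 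Hence some $\bfx$ admits none, i.e. $D_A(\Z_p)>k$. Contrapositively, $D_A(\Z_p)\le k$ forces $(t+1)^k>p$, i.e. $t>p^{1/k}-1$, and therefore $\fD(p,k)\ge p^{1/k}-1$.

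For the upper bound I would produce a good $A$ probabilistically, letting $A$ be a random subset of $[1,p-1]$ in which each element is included independently with probability $q=m/(p-1)$, where $m=\Theta\big((p\log p)^{1/k}\big)$ with a large constant depending on $k$. Fix $\bfx\in(\Z_p^\ast)^k$ and let $N(\bfx)$ count the solutions $\bfa\in A^k$ of $\langle\bfa,\bfx\rangle=0$ (these are automatically full support). A short count shows $\bfx^{\perp}$ contains at least $c_k\,p^{k-1}$ vectors with distinct nonzero coordinates, so $\mu:=\bE[N(\bfx)]\ge c_k\,p^{k-1}q^k=\Theta(m^k/p)=\Theta(\log p)$, uniformly in $\bfx$. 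The events $E_{\bfa}=\{\bfa\in A^k\}$ are increasing events on the independent indicators $\mathbf{1}[j\in A]$, so I would invoke Janson's inequality $\bP[N(\bfx)=0]\le\exp(-\mu+\Delta)$, where $\Delta$ sums $\bP[E_{\bfa}\cap E_{\bfa'}]$ over dependent pairs, namely solutions $\bfa\ne\bfa'$ sharing a common coordinate value. Grouping pairs by the number $c\ge1$ of shared values, the dominant term ($c=1$) is of order $m^{2k-1}/p^2$, whence $\Delta/\mu=O(m^{k-1}/p)\to0$ precisely because $m=\Theta((p\log p)^{1/k})$ keeps $m^{k-1}\ll p$. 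Thus $\bP[N(\bfx)=0]\le e^{-\mu/2}\le p^{-2k}$ once the constant in $m$ is large enough, and a union bound over the fewer than $p^k$ sequences $\bfx\in(\Z_p^\ast)^k$ shows that with positive probability the random $A$ satisfies $D_A(\Z_p)\le k$. Since also $|A|\le 2m$ with high probability, some realization gives $\fD(p,k)\le 2m=C(k)(p\log p)^{1/k}$.

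The routine parts are the lower bound and the mean computation; the crux is the dependency estimate in the upper bound. The main obstacle is controlling $\Delta$ in Janson's inequality \emph{uniformly} over all $\bfx\in(\Z_p^\ast)^k$: one must verify that, with a constant independent of $\bfx$, the number of pairs of solutions sharing exactly $c$ coordinate values is $O_k(p^{2k-c-2})$, and that solutions with repeated coordinate values contribute negligibly to both $\mu$ and $\Delta$. This uniform bookkeeping is what fixes the exponent $1/k$ and the admissible range of $m$, and is where the hypothesis that $p$ is large relative to $k$ enters through $p_0(k)$.
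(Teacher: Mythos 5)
Your proposal is correct and follows essentially the same route as the paper: the lower bound is the same double count (the paper phrases it as an edge count in a bipartite incidence graph, you as a union bound over a uniformly random $\bfx$, which is the identical computation), and the upper bound uses a random $A$ with inclusion probability $\Theta((p\log p)^{1/k}/p)$ together with Janson's inequality, with the same mean estimate $\mu=\Theta(p^{k-1}\theta^k)$ and the same dependency count $O_k(p^{2k-c-2})$ for pairs of solutions sharing $c$ values. The only cosmetic discrepancy is writing Janson's bound as $\exp(-\mu+\Delta)$ rather than $\exp(-\mu+\Delta/2)$, which affects nothing.
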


As some of our preliminary results will illustrate, this also determines asymptotically (up to a logarithmic factor) $\fD_G(k)$ for $G=\Z_p^r$ and $G=\Z_{p^r}$ and these in turn determine $\fD_G(k)$ in most cases since this theorem determines $\fD(p,k)$ up to a logarithmic factor for all integers $k\ge 2$. 

In a couple of special cases, viz., $k=2, 4$, we are able to obtain an asymptotically sharper upper bound which is tight upto constant factors.  In fact, for the case $k=2$, the extremal problem even achieves tight bounds in certain special cases. As we shall see, the case of $k=2$ is related to an older problem of determining minimal sized difference bases of $[p]=\{1,\ldots,p\}$.  

\begin{thm}\label{2and4} Let $p$ be an odd prime.
\begin{enumerate}
\item If $p=q^2+q+1$ for some prime $q$ then $\fD(p,2)=\lceil\sqrt{p-1}\rceil$.
\item $\fD(p,2)\le 2\sqrt{p}-1$.
\item $\fD(p,4)\le Cp^{1/4}$ for some absolute constant $C>0$.\end{enumerate}\end{thm}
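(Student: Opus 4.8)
The plan is to first translate the condition $D_A(\Z_p)\le k$ into a covering statement and then handle the three parts by increasingly refined constructions. Observe that choosing a non-empty subsequence together with weights from $A$ is the same as choosing a non-zero vector $w\in(A\cup\{0\})^k$, and that $\sum_j a_jx_{i_j}=0$ is exactly $\langle w,x\rangle=0$ in $\Z_p$. Since $p$ is prime and $A\subseteq[1,p-1]$, a one-term subsequence can vanish only when the corresponding coordinate is $0$; hence $D_A(\Z_p)\le k$ holds iff for every $x\in(\Z_p^*)^k$ there is a non-zero $w\in(A\cup\{0\})^k$ with $\langle w,x\rangle=0$. For $k=2$ this says precisely that for all non-zero $x_1,x_2$ the ratio $x_1/x_2$ is of the form $-b a^{-1}$ with $a,b\in A$; as $x_1/x_2$ ranges over $\Z_p^*$ and negation permutes $\Z_p^*$, this is equivalent to the quotient set $A/A:=\{ab^{-1}:a,b\in A\}$ being all of $\Z_p^*$. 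Thus $\fD(p,2)$ is the least size of a set $A\subseteq\Z_p^*$ whose quotient set is everything.

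Passing to a primitive root $g$ and writing $A=\{g^s:s\in S\}$ converts the multiplicative condition $A/A=\Z_p^*$ into the additive one $S-S=\Z_{p-1}$; that is, $\fD(p,2)$ equals the minimum size of a difference basis of $\Z_{p-1}$, which is where the connection to difference bases enters. The lower bound is immediate from counting: a set of size $m$ yields at most $m(m-1)$ non-zero differences, so $m(m-1)\ge p-2$, whence $m\ge\lceil\sqrt{p-1}\rceil$ in general, and $m\ge q+1=\lceil\sqrt{p-1}\rceil$ when $p=q^2+q+1$. For part (1) it then remains to match this by exhibiting a \emph{perfect} difference basis of $\Z_{q^2+q}$ of size exactly $q+1$, in which each non-zero element is realised as a difference essentially once; here I would give an explicit Singer/planar-difference-set type construction, available because $q$ is prime, and pull the resulting set back through $g$ to obtain the extremal $A$. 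I expect constructing this exact-size (near-perfect) basis to be the main obstacle in (1), since the counting bound leaves no slack at all.

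For part (2) only an upper bound is needed, so a crude explicit difference basis of $\Z_{p-1}$ suffices. I would take two blocks, $S=\{0,1,\dots,s-1\}\cup\{0,s,2s,\dots,cs\}$ with $s,c\approx\sqrt{(p-1)/2}$. The positive differences $js-i$ fill the interval $[1,cs]$ with no gaps and, together with their negatives, cover a block of $2cs+1$ consecutive integers; once $2cs\ge p-2$ this is a full residue system, so $S-S=\Z_{p-1}$. Optimising $s+c$ subject to $cs\ge(p-2)/2$ gives $|S|\le 2\sqrt p-1$, and transporting through $g$ yields the claimed $A$.

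Part (3) is the substantial one. The probabilistic argument behind Theorem~\ref{fD_prime} meets every hyperplane $x^\perp$ with a random $A$ of size $(p\log p)^{1/4}$, the $\log$ coming from a union bound over the $\approx p^{3}$ directions; the goal is to remove it by an explicit, structured $A$ of size $O(p^{1/4})$ for which $(A\cup\{0\})^4$ provably meets every fully non-zero hyperplane. The approach I would pursue is a two-scale (product) construction: split the target into a coarse and a fine scale of size $\approx\sqrt p$, solve each scale by a difference-basis/Sidon-type set of size $\approx p^{1/4}$ as in parts (1)–(2), and use the four available weights to realise the composition of the two scales through the grouping $\langle w,x\rangle=(a_1x_1+a_2x_2)+(a_3x_3+a_4x_4)$. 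The crux, and the expected main obstacle, is to show that for every quadruple the two value-sets $\{a_1x_1+a_2x_2\}$ and $\{-(a_3x_3+a_4x_4)\}$, each of size $\approx\sqrt p$, are forced to intersect (the ``carry'' between the two scales), thereby upgrading the heuristic $|A|^4\gtrsim p$ into a genuine cover with no logarithmic loss.
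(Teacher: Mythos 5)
Your reduction of $D_A(\Z_p)\le 2$ to the quotient--set condition $A/A=\Z_p^*$, and thence (via a primitive root) to difference bases of $\Z_{p-1}$, is correct and is essentially where the paper also starts; part (2) of your proposal is complete, and your two--block basis $\{0,\dots,s-1\}\cup\{0,s,\dots,cs\}$ in exponent space in fact gives $|A|\le\sqrt{2p}$, better than the stated $2\sqrt p-1$ (the paper instead takes $A$ to be the interval $[-\sqrt p,\sqrt p]$ with $0$ removed, inside $\Z_p$ itself, and verifies $A/A=\Z_p^*$ by a pigeonhole observation). The counting lower bound in (1) is also fine. But in (1) the entire content is the construction you defer: you need a difference basis of $\Z_{q^2+q}$ of size exactly $q+1$, and Singer's theorem does not hand you one --- it produces a perfect difference set in $\Z_{q^2+q+1}$, and a \emph{perfect} difference set of size $q+1$ cannot live in $\Z_{q^2+q}$ since $(q+1)q\ne q^2+q-1$; what is required is a near--perfect basis in which exactly one nonzero residue is represented twice, and its existence is precisely the point at issue. (The paper takes the Singer set $D\subset\Z_p=\Z_{q^2+q+1}$ and puts $A=\{\theta^d:d\in D\}$ for a primitive root $\theta$; note that one must then check that differences computed modulo $p$ transfer correctly to exponents, which live modulo $p-1$.) As written, your part (1) is a reduction to an unproved existence statement.

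Part (3) has the more serious gap. Your final step asks that the two value--sets $\{a_1x_1+a_2x_2\}$ and $\{-(a_3x_3+a_4x_4)\}$, ``each of size $\approx\sqrt p$, are forced to intersect.'' Two subsets of $\Z_p$ of size $\sqrt p$ need not intersect, and nothing in a two--scale or Sidon--type construction forces them to; moreover, for adversarial $\alpha=x_2/x_1$ the sumset $A+\alpha A$ need not even reach size $\sqrt p$. This is exactly where the paper has to work: it arranges that for \emph{every} $\alpha$ the set $A+\alpha A$ contains a difference--closed set, i.e.\ a set of the form $Y-Y$ with $|Y|>\sqrt p$ (using that a sum of two dilated intervals is a rank--$2$ generalized arithmetic progression, which contains a large subset $Y$ with $Y-Y$ inside it), so that the pigeonhole observation $|Y|\,|Y'|>p\Rightarrow (Y-Y)/(Y'-Y')_*=\Z_p$ yields $\Z_p^*\subset(A+\alpha A)/(A+\beta A)$. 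To guarantee this for all $\alpha$, the paper takes $A$ to be a union of $O(1)$ dilated intervals $t[-L,L]$ with $L=C_0p^{1/4}$, proves that the $\alpha$ for which $[-L,L]+\alpha t[-L,L]$ is small are confined to a structured set $t^{-1}S$ with $S=[-2L,2L]_*/[1,L/100]$, and then shows by a counting argument that $O(1)$ dilates of $S$ can be chosen with empty common intersection. None of these ideas is present in, or implied by, your sketch, so your part (3) is an outline of the difficulty rather than a proof.
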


In particular, this theorem establishes that $\fD(p,k)$ is of the order of $p^{1/k}$ for $k=2,4$ upto a constant factor. As for the tightness result, there is an old conjecture of Hardy-Littlewood that there are infinitely many prime pairs $(p,q)$ such that $p=q^2+q+1$.  


The rest of the paper is organized as follows. In the next section, we consider the extremal problem more formally, and prove a few lemmas that substantiate our claim that the problem is most interesting in the case when the group is cyclic of prime order. In the next section, we prove theorems \ref{fD_prime} and \ref{2and4}. We conclude the paper with some remarks and open questions.

Before we end this section, we set up some notation and terminology.  For a sequence $\bfx=(x_1,\ldots,x_m)$ and an integer $\lambda$, by $\lambda\cdot\bfx$, we shall mean the sequence $(\lambda x_1,\ldots,\lambda x_m)$. For a subset $I\subset [m]$ of the set of indices, we shall denote by $\bfx_I$ the sum $\displaystyle\sum_{i\in I} x_i$, and for sequences $\bfx=(x_1,\ldots,x_m), \bfy=(y_1,\ldots,y_m)$ of the same length, we shall denote by $\langle \bfx,\bfy\rangle_I$ the sum $\displaystyle\sum_{i\in I} x_iy_i$ where as before, $I\subset[m]$. In case $I=[m]$ then we shall drop the subscript and simply write $\langle\bfx,\bfy\rangle$ to denote $\langle\bfx,\bfy\rangle_{[m]}$.

\section{The extremal problem of $\fD_G(k)$: Preliminaries}
Let $k\ge 2$ be an integer. We start by recalling the definition of $\fD_G(k)$: 
\begin{eqnarray}\label{fdG_def} \fD_G(k):=\min\{|A|: A\subseteq[1,n-1]\textrm{\ satisfies\ }D_A(G)\le k\}.\end{eqnarray}
We shall (as mentioned in the introduction) write $\fD(n,k)$ to denote $\fD_G(k)$ for $G=\Z_n$.

\begin{prop}\label{HxK} Let $G=H_1\times\cdots \times H_r$ be the product of $p_i$-groups $H_i$ with $p_1<p_2<\ldots<p_r$. Then for all $k$, $\fD_G(k)\le\min\{\fD_{H_i}(k):1\le i\le r\}$. In particular, 
if $G= \Z_p\times H$ is a finite abelian group with $p\nmid |H|,$ then for any integer $k$, $ f_G^{(D)}(n,k)\leq f^{(D)}(p,k).$ \end{prop}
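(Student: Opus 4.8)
The plan is to exploit the Chinese Remainder Theorem to \emph{transplant} an optimal weight set from a single factor $H_i$ up to all of $G$. It suffices to prove $\fD_G(k)\le \fD_{H_i}(k)$ for each fixed $i$ with $\fD_{H_i}(k)<\infty$ (the infinite case being vacuous), and then minimize over $i$. So fix $i$ and let $A_i\subseteq[1,\exp(H_i)-1]$ be a weight set realizing the minimum, i.e. $D_{A_i}(H_i)\le k$ and $|A_i|=\fD_{H_i}(k)$. Since the $H_j$ are $p_j$-groups with the $p_j$ distinct, the exponents $m_j:=\exp(H_j)$ are pairwise coprime, and hence $n:=\exp(G)=\prod_j m_j$.

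For each $a\in A_i$ I would define a lift $\tilde a\in[1,n-1]$ by the system of congruences $\tilde a\equiv a\pmod{m_i}$ and $\tilde a\equiv 0\pmod{m_j}$ for every $j\ne i$. Coprimality of the moduli guarantees a unique solution modulo $n$; since $a\not\equiv 0\pmod{m_i}$ the lift satisfies $\tilde a\ne 0$, so indeed $\tilde a\in[1,n-1]$, and the map $a\mapsto\tilde a$ is injective because distinct residues mod $m_i$ produce distinct lifts. I then set $A:=\{\tilde a:a\in A_i\}$, so that $|A|=|A_i|=\fD_{H_i}(k)$.

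The crucial observation concerns how these weights act on the factors. Writing $x\in G$ through its components $x=(x^{(1)},\ldots,x^{(r)})$ with $x^{(j)}\in H_j$, the congruence $\tilde a\equiv 0\pmod{m_j}$ forces $\tilde a\, x^{(j)}=0$ in $H_j$ for $j\ne i$ (the exponent annihilates $H_j$), while on the $i$-th factor $\tilde a\, x^{(i)}=a\, x^{(i)}$. Thus a weighted sum formed with lifted weights vanishes in $G$ exactly when its $i$-th component vanishes in $H_i$. To verify $D_A(G)\le k$, I take an arbitrary $G$-sequence $(x_1,\ldots,x_k)$, project it onto $H_i$, and apply $D_{A_i}(H_i)\le k$ to get a nonempty index set $J$ and weights $a_j\in A_i$ with $\sum_{j\in J}a_j x_j^{(i)}=0$ in $H_i$; the corresponding lifted weights $\tilde a_j\in A$ then give $\sum_{j\in J}\tilde a_j x_j=0$ in $G$ by the component analysis. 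This yields $\fD_G(k)\le\fD_{H_i}(k)$, and minimizing over $i$ proves the main inequality.

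There is no serious obstacle here; the one point requiring care is the nonvanishing of each lift (needed to keep $A\subseteq[1,n-1]$ and to preserve $|A|=|A_i|$), and this is precisely where the hypothesis of distinct primes is used—it makes the moduli coprime, so that CRT applies and the residue $a\ne 0 \pmod{m_i}$ survives into the lift. For the ``in particular'' statement I would simply take $G=\Z_p\times H$ with $H_1=\Z_p$ (a $p$-group of prime order) and the remaining factors the primary components of $H$, all attached to primes different from $p$ since $p\nmid|H|$; the general inequality with $i=1$ then reads $\fD_G(k)\le\fD_{\Z_p}(k)=\fD(p,k)$, as claimed.
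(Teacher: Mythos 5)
Your proof is correct and takes essentially the same route as the paper: both lift the optimal weight set $A_i$ for a single factor $H_i$ to a weight set of the same size inside $[1,\exp(G)-1]$ whose elements annihilate all the complementary factors, thereby reducing the zero-sum condition in $G$ to the projection onto $H_i$. The only cosmetic difference is that the paper uses the explicit lift $a\mapsto (n/n_i)a$ (so the $H_i$-component of the weighted sum becomes $(n/n_i)\sum_j a_j x_j^{(i)}=0$) rather than your CRT lift fixing $\tilde a\equiv a\pmod{\exp(H_i)}$.
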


\begin{proof} Write $n_i=\exp(H_i)$, and let $n:=\exp(G)=\prod_{i=1}^r n_i$. 

Suppose $\fD_{H_i}(k) = l$, and let $A_i:=\{a_1,\ldots,a_{l}\}\subset [1,n_i-1]$ be such that $D_{A_i}(H_i)=k$. Then consider the set $$A=\{(n/n_i)a_1,\ldots,(n/n_i)a_l\}\subset [1,n-1].$$ We claim that for any $G$-sequence $\bfg=(g_1,\ldots,g_k)$  of length $k$ there exists $\bfa=(a_1,\ldots,a_k) \in (A\cup \{0\})^k \setminus\{\bfzero_k\},$ for which $\langle \bfg,\bfa\rangle=0$ in $G$.

 Write $g_i=(x_i,y_i)$ for $i\in [k]$, where $x_i \in H_i$ and $y_i \in H:=\prod_{j\ne i} H_j$.  Since $\exp(H)= n/n_i$ we have $(n/n_i)y_i = 0$ for all $i$. Furthermore, by the assumption, there exists  $\bfa=(a_{1},\ldots, a_{k}) \in (A\cup \{0\})^k \setminus\{\bfzero_k\}$ such that 
 $\langle \bfx,\bfa\rangle=0$, where $\bfx=(x_1,\ldots,x_k)$. Consequently,  
 $$\sum_{j=1}^{k} \big(n/n_i)  a_{j}\big) (x_j,y_j) = \left(\sum_{j=1}^{k} (n/n_i) a_{j}x_j,\sum_{j=1}^{k}(n/n_i)a_{j}y_j\right)= (0,0)$$ and that completes the proof.
 
 The second part of the statement is an immediate consequence of the first part.
 \end{proof}

The next proposition compares groups $G,G'$ with the same exponent.
\begin{prop}\label{fD_ZpxZp}
Let $k\ge 2$. Suppose $G$ and $H$ are finite abelian groups with $H=G\times G'$ and $\exp(G)=\exp(H)$. Then 
$$ \fD_G(k)\le\fD_{H}(k).$$ In particular, if $G_n=(\Z_p)^n$ and we write $f_n:=\fD_G(k)$, then the sequence $\{f_n\}_{n\ge 1}$ is increasing. 
 \end{prop}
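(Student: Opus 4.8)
The plan is to show directly that any weight set witnessing $D_A(H)\le k$ also witnesses $D_A(G)\le k$, by transporting zero-sum subsequences along the canonical inclusion of $G$ as a direct factor of $H$. The key structural remark is that the hypothesis $\exp(G)=\exp(H)$ makes the admissible weight ranges for the two parameters coincide: writing $n:=\exp(G)=\exp(H)$, both $\fD_G(k)$ and $\fD_H(k)$ are minima taken over sets $A\subseteq[1,n-1]$, so a set admissible for one is automatically admissible for the other.

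First I would dispose of the trivial case $\fD_H(k)=\infty$, where the inequality holds vacuously. Otherwise I would fix a set $A\subseteq[1,n-1]$ with $|A|=\fD_H(k)$ and $D_A(H)\le k$, and claim that this same $A$ satisfies $D_A(G)\le k$. To verify the claim I would take an arbitrary $G$-sequence $\bfg=(g_1,\ldots,g_k)$ and pass to the $H$-sequence $((g_1,0),\ldots,(g_k,0))$ obtained by embedding each $g_i$ into the first coordinate of $H=G\times G'$. Applying $D_A(H)\le k$ yields a non-empty $I\subseteq[k]$ and weights $a_i\in A$ with $\sum_{i\in I}a_i(g_i,0)=(0,0)$ in $H$; since scalar multiplication acts coordinatewise and $g\mapsto(g,0)$ is a homomorphism, reading off the first coordinate gives $\sum_{i\in I}a_ig_i=0$ in $G$. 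This is exactly a non-trivial weighted zero-sum subsequence of $\bfg$ with weights in $A$, so $D_A(G)\le k$ and hence $\fD_G(k)\le|A|=\fD_H(k)$.

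For the ``in particular'' clause I would observe that $G_{n+1}=(\Z_p)^{n+1}=(\Z_p)^n\times\Z_p=G_n\times\Z_p$, and that $\exp(G_{n+1})=\exp(G_n)=p$ for every $n\ge1$. Thus $G=G_n$, $G'=\Z_p$, $H=G_{n+1}$ satisfy the hypotheses of the main statement, giving $f_n=\fD_{G_n}(k)\le\fD_{G_{n+1}}(k)=f_{n+1}$, so $\{f_n\}_{n\ge1}$ is (weakly) increasing.

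I do not expect a genuine obstacle: the argument is a one-line transfer along the coordinate embedding. The single point that must be handled with care---and the only place the hypothesis $\exp(G)=\exp(H)$ is actually used---is the consistency of the weight ranges. If one dropped equality of exponents, one could have $\exp(H)>\exp(G)$, so a set $A$ witnessing $D_A(H)\le k$ might contain weights exceeding $\exp(G)-1$; such weights are not admissible competitors in the definition of $\fD_G(k)$, and reducing them modulo $\exp(G)$ risks introducing a zero weight or collapsing two weights together, either of which could shrink $A$ or destroy the zero-sum relation. The equal-exponent hypothesis is precisely what rules this out, and so the transfer goes through verbatim.
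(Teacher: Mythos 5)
Your proposal is correct and follows essentially the same route as the paper: take an optimal weight set $A$ for $H$, lift a $G$-sequence to $H$ via $g\mapsto(g,0)$, apply $D_A(H)\le k$, and read off the first coordinate. Your explicit discussion of why $\exp(G)=\exp(H)$ is needed (admissibility of the weight range) is a point the paper leaves implicit, but the argument is the same.
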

\begin{proof} Let $f_{H}^{(D)}(k)=\ell$. Let $A\subset[1,\exp(G)-1]$ with $|A|=\ell$ such that for all $H$-sequences $\bfx$ of length $k$, there exists $\bfa\in(A\cup\{0\})^k\setminus\{\bfzero_k\}$ such that $\langle \bfx,\bfa\rangle=0$. Let $\bfy=(y_1,\ldots,y_k)$ be a $G$-sequence of length $k$, and consider the  sequence $\bfx=(x_1,\ldots,x_k)$, where $x_i= (y_i,0)\in H$. Let 
$\bfa=(a_1,\ldots,a_k) \in (A\cup \{0\})^k \setminus\{\bfzero_k\}$ be such that $\langle \bfx,\bfa\rangle=0$ in $H$. But since $\exp(G)=\exp(H)$, this implies that $\langle  \bfy,\bfa\rangle=0$
in $G$ as well. This completes the proof. 

The second part is again a straightforward consequence of the first statement.
\end{proof}

 The next theorem contrasts $\fD(p,k)$ with $\fD_G(k)$ for $G=\Z_{p^m}$.

\begin{thm}\label{bndsZp^m} Let $p$ be a prime and $m\geq 1, k\geq 2$ be positive integers., Then for $G= \Z_{p^m},$
$$p^{1/k}-1\le f_G^{(D)}(k)= f^{(D)}(p,k).$$
\end{thm}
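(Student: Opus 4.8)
My plan is to handle the numerical lower bound and the equality separately, and to prove the equality $\fD_{\Z_{p^m}}(k)=\fD(p,k)$ as two opposite inequalities, each via an explicit transformation of the weight set. The lower bound I would get from a union bound: fix $A\subseteq[1,p^m-1]$ with $|A|=\ell$ and suppose $(\ell+1)^k\le p$; I claim some length-$k$ sequence $\bfx$ has no nontrivial relation, so $D_A(\Z_{p^m})>k$. For each nonzero $\bfa\in(A\cup\{0\})^k$ the map $\bfx\mapsto\langle\bfa,\bfx\rangle$ is a homomorphism $\Z_{p^m}^k\to\Z_{p^m}$ whose image is $\gcd(a_1,\dots,a_k,p^m)\,\Z_{p^m}$; since some $a_i\in[1,p^m-1]$ has $v_p(a_i)\le m-1$, the image has order at least $p$, so the solution set of $\langle\bfa,\bfx\rangle=0$ has at most $p^{mk-1}$ elements. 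Summing over the at most $(\ell+1)^k-1<p$ nonzero $\bfa$ leaves fewer than $p\cdot p^{mk-1}=p^{mk}$ bad $\bfx$, hence a good one exists. The contrapositive gives $(\ell+1)^k>p$, i.e.\ $\ell\ge p^{1/k}-1$.

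For $\fD_{\Z_{p^m}}(k)\le\fD(p,k)$ I would scale a good weight set up by $p^{m-1}$. Let $C\subseteq[1,p-1]$ realize $\fD(p,k)$ (this is finite for $k\ge2$, e.g.\ $C=[1,p-1]$ works), so $D_C(\Z_p)\le k$, and set $A=\{p^{m-1}c:c\in C\}\subseteq[1,p^m-1]$, a set of the same cardinality. Given any $\Z_{p^m}$-sequence $\bfx$, reduce it mod $p$ and apply $D_C(\Z_p)\le k$ to get $c_i\in C\cup\{0\}$, not all zero, with $\sum_i c_ix_i\equiv0\pmod p$. Then $a_i:=p^{m-1}c_i$ lie in $A\cup\{0\}$, are not all zero, and $\sum_i a_ix_i=p^{m-1}\sum_i c_ix_i$ is divisible by $p^{m-1}\cdot p=p^m$; this is a nontrivial $A$-relation, so $D_A(\Z_{p^m})\le k$.

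The reverse inequality $\fD(p,k)\le\fD_{\Z_{p^m}}(k)$ is where I expect the real difficulty. Starting from $A\subseteq[1,p^m-1]$ with $D_A(\Z_{p^m})\le k$, the naive move is to reduce $A$ mod $p$, but a guaranteed relation in $\Z_{p^m}$ may be supported entirely on multiples of $p$ and thus collapse to the trivial relation mod $p$. I would repair this by peeling off the $p$-adic valuation: set $B=\{(a/p^{v_p(a)})\bmod p:a\in A\}\subseteq[1,p-1]$, the reduced unit parts, with $|B|\le|A|$. Given a $\Z_p$-sequence $\bfy$, lift it entrywise to representatives in $\{0,\dots,p-1\}\subset\Z_{p^m}$ and apply $D_A(\Z_{p^m})\le k$ to obtain a nonzero $\bfa$ with $\sum_i a_iy_i\equiv0\pmod{p^m}$. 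Let $v=\min\{v_p(a_i):a_i\ne0\}\le m-1$; dividing by $p^v$ yields $\sum_i (a_i/p^v)y_i\equiv0\pmod{p^{m-v}}$, and reducing mod $p$ gives weights $b_i=(a_i/p^v)\bmod p\in B\cup\{0\}$ with $\sum_i b_iy_i\equiv0\pmod p$.

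The crucial point, and the step I would check most carefully, is that $\bfb\ne\bfzero$: every index attaining the minimum valuation $v$ contributes a \emph{unit} to $a_i/p^v$ and hence a nonzero $b_i$, while indices of strictly higher valuation contribute $0$. This is exactly why valuation peeling succeeds where plain reduction mod $p$ fails, and it shows $D_B(\Z_p)\le k$, so $\fD(p,k)\le|B|\le|A|=\fD_{\Z_{p^m}}(k)$. Combining the two inequalities gives $\fD_{\Z_{p^m}}(k)=\fD(p,k)$, which together with the counting bound $\fD_{\Z_{p^m}}(k)\ge p^{1/k}-1$ completes the proof.
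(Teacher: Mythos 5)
Your proposal is correct and follows essentially the same route as the paper: the scaling by $p^{m-1}$ for the upper inequality, the reduction to unit parts mod $p$ (your ``valuation peeling'' is exactly the paper's decomposition $A=A_0\cup pA_1\cup\cdots\cup p^{m-1}A_{m-1}$ followed by examining the first nonvanishing summand) for the reverse inequality, and a counting/union bound for the numerical lower bound, which the paper phrases as a bipartite double count in $\Z_p$ and you carry out directly in $\Z_{p^m}$. All steps, including the key nontriviality check that an index of minimal valuation yields a nonzero weight mod $p$, are sound.
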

\begin{proof} We first prove that $\fD_G(k)=\fD(p,k)$. Let $f^{(D)}(p,k)= l.$ Then there exists $A= \{ a_1,\cdots, a_l\}\subset \Z_p^*$ of size $l$ such that
for every $\Z_p$-sequence $\bfx:=(x_1,\ldots,x_k)$ of length $k$, there exists $\bfa=(a_{i_1},\cdots,a_{i_k}) \in (A\cup \{0\})^k \setminus\{\bfzero_k\}$ such that $\langle \bfa,\bfx\rangle =0$. Set $A'= p^{m-1}\cdot A \subset [p^m-1]$. Consider a $\Z_{p^m}$-sequence $\bfx':=(x_1',\ldots,x_k')$ of length $k$ and let $x_i$ denote the projection of $x_i'$ on $\Z_p.$
By our assumption, there exist $(a_{i_1},\ldots,a_{i_k}) \in (A\cup \{0\})^k \setminus \{\bfzero_k\},$ such that $\langle \bfa,\bfx' \rangle =0$. Consequently, 
$\langle p^{m-1}\cdot\bfa,\bfx'\rangle =0$ in $G$. This establishes that $\fD_G(k)\le \fD(p,k)$.

 We shall now prove the reverse inequality, i.e., $\fD_G(k)\ge\fD(p,k)$ by means of contradiction.  Clearly, $m\ge 2$, or there is nothing to prove. Suppose if possible that there exists $A\subset [1,p^{m}-1]$ with $|A|<\fD(p,k)$ such that for every $G$-sequence $\bfx=(x_1,\ldots,x_k)$ of length $k$ there exists $\bfa\in (A\cup\{0\})^k\setminus\{\bfzero_k\}$ such that $\langle\bfa,\bfx\rangle=0$ in $G$. Let us write 
 $$A=A_0\cup (p\cdot A_1)\cup\cdots\cup \left(p^{m-1}\cdot A_{m-1}\right)$$ with $A_i\subset\Z_{p^{m-i}}^*$ for each $0\le i\le m-1$. Let $A_i':= A_i\pmod p$, and let $B=\cup_{i=0}^{m-1} A_i'$. Clearly, $B\subset [1,p-1]$ and $|B|\le |A|<\fD(p,k)$. 
 
 Let $\bfy=(y_1\ldots,y_k)$ be a $\Z_p$-sequence of length $k$. Viewing this as a $G$-sequence, and using the property of $A$, it follows that there exists $\bfa\in (A\cup\{0\})^k\setminus\{\bfzero_k\}$ such that
 \begin{eqnarray*}\label{fDk} \sum_{A_0} a_i y_i + p\cdot\sum_{A_1} a_iy_i +\cdots+p^{m-1}\sum_{A_{m-1}} a_iy_i\equiv 0\pmod{p^m}.\end{eqnarray*}
Note that in this notation if all the $a_i$ listed in a particular summand are zero, then we treat that summand as empty. 

Now, if the first summand is non-empty, then in particular, we 
must have $\sum_{A_0'} a_i' y_i=0$ in $\Z_p$; here by $a_i'$ we 
mean the corresponding projection of $a_i$ into the set $A_0'$. In 
general, let the first non-empty summand in (\ref{fDk}) is 
$p^j\sum_{A_j} a_i y_i$, then note that considering (\ref{fDk}) 
modulo $p^j$ it follows that $\sum_{A_j'} a_i'y_i=0$ in $\Z_p$. 
Since at least one summand is non-empty, this yields a non-empty 
subsequence of $\bfy$ that admits a $B$-weighted zero-sum 
subsequence in $\Z_p$, contradicting that $|B|<\fD(p,k)$. This 
completes the other inequality, and thereby establishes 
$\fD_G(k)=\fD(p,k)$ for $G=\Z_{p^m}$.

 We finally prove that $\fD(p,k)\ge p^{1/k}-1$. Consider a 
 bipartite graph $\G=(V,E)$ with $V=\X\cup \Y$, where $\X$ 
 consists of all $k$-tuples 
 $\bfa\in(A\cup\{0\})^k\setminus\{\bfzero_k\}$ and $\Y$ consists 
 of all $k$-tuples $\bfx= (x_1,x_2,\ldots,x_k)$ where all the 
 $x_i$'s are non-zero elements in $\Z_p$, and $\bfa,\bfx$ are 
 adjacent in $\G$ if and only if $\langle\bfa,\bfx\rangle=0$ in 
 $\Z_p$. By the hypothesis on $A$, it follows that every vertex of 
 $\Y$ is adjacent to at least one vertex of $\X$, so $\G$ has at 
 least $(p-1)^k$ edges. On the other hand, fix $\bfa\in \X$, and 
 assume without loss of generality that $a_1\ne 0$. Then for any 
 possible choices for $x_2,\ldots,x_k\in\Z_p^*$, the equation 
 $a_1x_1=-(a_2x_2+\cdots+a_kx_k)$ admits a unique solution for 
 $x_k\in\Z_p$, so that the vertex $\bfa\in \X$ has degree at most 
 $(p-1)^{k-1}$. Hence $$|\X|(p-1)^{k-1}\geq |E|\geq (p-1)^k,$$
and since $|\X|=(|A|+1)^k-1$, it follows that $$|A|=f^{(D)}(p,k) \geq p^{1/k}-1$$ and that completes the proof.





\end{proof}
 The last part of the proof of theorem \ref{bndsZp^m} in fact can be modified \textit{mutatis mutandis} to also show that $\fD_G(k)\ge |G|^{1/k} -1$ holds for $G=\Z_p^s$. To reiterate a point we mentioned in the introduction, $\fD_G(k)=\infty$ for $k\le s$. In light of the remark above, it is somewhat natural that we turn our attention to the case  $G=\Z_n^s$. The following proposition shows that for $k>s+1$ and $p$ reasonably large, the parameter $\fD_G(k)<\infty$.

\begin{prop}\label{pm1-pmr}
Let $G= \Z_{n_1}\times \cdots \times \Z_{n_s}$, where $1<n_1\mid\cdots\mid n_s$. 
Let $1\le r<(n-1)/2$, and let $A= \{\pm 1,\pm 2,\cdots,\pm r\}$. Then 
\begin{eqnarray*} 1+\sum_{i=1}^s{\left\lceil\log_{r+1}n_i\right\rceil}\geq D_A(G)&\geq& 1+\sum_{i=1}^s{\left\lfloor\log_{r+1}n_i\right\rfloor} \textrm{\ for\ }s\ge 2\\
                                D_A(\Z_n)&=&{\left\lfloor\log_{r+1}n\right\rfloor}+1.\end{eqnarray*}
 Consequently, $\fD(n,k)\le 2(n^{1/(k-1)}-1)$, and $\fD_G(k)\le 2(|G|^{1/(k-s-1)}-1)$ for $s>1$.
\end{prop}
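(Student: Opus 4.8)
The plan is to first nail the exact cyclic value $D_A(\Z_n)=\lfloor\log_{r+1}n\rfloor+1$, then get the two-sided product bound by pairing a per-coordinate lower-bound construction with a single global pigeonhole, and finally read off the two ``consequently'' estimates by choosing the radius $r$. Write $m=\lfloor\log_{r+1}n\rfloor$ and $A=\{\pm1,\ldots,\pm r\}$. For $D_A(\Z_n)\ge m+1$ I would exhibit the length-$m$ sequence $\bfx=(1,r+1,(r+1)^2,\ldots,(r+1)^{m-1})$ and argue it admits no $A$-weighted zero-sum. Any weighted combination equals $\sum_{t=0}^{m-1}c_t(r+1)^t$ with $c_t\in\{-r,\ldots,r\}$; its absolute value is at most $\sum_{t=0}^{m-1}r(r+1)^t=(r+1)^m-1\le n-1$, so it can vanish mod $n$ only if it vanishes as an integer. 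But a vanishing integer combination with digits in $\{-r,\ldots,r\}$ is impossible: taking the least index $t_0$ with $c_{t_0}\ne0$ and reducing modulo $(r+1)^{t_0+1}$ forces $(r+1)\mid c_{t_0}$, contradicting $0<|c_{t_0}|\le r$. For the matching upper bound $D_A(\Z_n)\le m+1$ I would use pigeonhole: among any $m+1$ terms, the $(r+1)^{m+1}>n$ combinations with coefficients in $\{0,1,\ldots,r\}$ cannot be distinct mod $n$, and the difference of two coinciding ones is a nontrivial $A$-weighted zero-sum (its nonzero coefficients lie in $\{\pm1,\ldots,\pm r\}=A$).

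\textbf{The product bounds.} For the lower bound $D_A(G)\ge1+\sum_i\lfloor\log_{r+1}n_i\rfloor$ I would concatenate, over coordinates $i=1,\ldots,s$, the blocks $(r+1)^t e_i$ for $0\le t<m_i:=\lfloor\log_{r+1}n_i\rfloor$, where $e_i$ generates the $i$-th factor. Since distinct coordinates are independent in $G=\prod\Z_{n_i}$, any weighted zero-sum decouples into the $s$ congruences $\sum_t c_{i,t}(r+1)^t\equiv0\pmod{n_i}$, and the cyclic non-vanishing argument applied in each coordinate forces all $c_{i,t}=0$; hence this length-$\sum_i m_i$ sequence has no weighted zero-sum. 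For the stated upper bound I would run the same pigeonhole globally: a length-$L$ sequence gives $(r+1)^L$ combinations with digits in $\{0,\ldots,r\}$ inside $G$, so $(r+1)^L>|G|$ already forces a collision and a weighted zero-sum. This yields $D_A(G)\le\lfloor\log_{r+1}|G|\rfloor+1$, and since $\lfloor\log_{r+1}|G|\rfloor=\lfloor\sum_i\log_{r+1}n_i\rfloor\le\sum_i\lceil\log_{r+1}n_i\rceil$, the claimed bound $1+\sum_i\lceil\log_{r+1}n_i\rceil$ follows.

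\textbf{The consequences.} To bound $\fD(n,k)$ I would take $r$ to be the least integer with $(r+1)^{k-1}\ge n$, i.e. $r+1=\lceil n^{1/(k-1)}\rceil$; then $\log_{r+1}n\le k-1$, so $D_A(\Z_n)=\lfloor\log_{r+1}n\rfloor+1\le k$, while $|A|=2r$ has the stated size $2(n^{1/(k-1)}-1)$ up to the rounding in the ceiling (with equality when $n^{1/(k-1)}\in\Z$). For a general product with $s>1$ I would instead impose the clean sufficient condition $(r+1)^{k-s-1}\ge|G|$: bounding $\lceil\log_{r+1}n_i\rceil\le\log_{r+1}n_i+1$ in the upper bound gives $D_A(G)\le1+s+\log_{r+1}|G|\le k$ as soon as $\log_{r+1}|G|\le k-s-1$, which needs $k>s+1$ and is met by $r+1=\lceil|G|^{1/(k-s-1)}\rceil$, yielding $|A|=2r\le2(|G|^{1/(k-s-1)}-1)$.

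\textbf{The main obstacle.} The technical heart is the non-vanishing claim for the geometric-progression sequence, namely the base-$(r+1)$ balanced-digit argument together with the magnitude estimate $(r+1)^m-1<n$ that is forced by the choice $m=\lfloor\log_{r+1}n\rfloor$; once this is secured, the decoupling over coordinates and the two pigeonhole counts are routine. The only other point requiring care is the floor/ceiling bookkeeping in the consequences, in particular verifying that the chosen $r$ meets the standing hypothesis $r<(\exp(G)-1)/2$, which is exactly where the assumption that the exponent (the prime $p$) is ``reasonably large'' relative to $k$ and $s$ is used.
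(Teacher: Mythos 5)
Your proposal is correct and follows essentially the same route as the paper: the lower bound via the concatenated geometric-progression blocks $(r+1)^t e_i$ (whose weighted sums decouple coordinatewise and cannot vanish by the uniqueness of base-$(r+1)$ representations), and the upper bound via the pigeonhole count of $(r+1)^t>|G|$ digit-combinations, with the ``consequently'' clauses obtained by the same choice of $r$. Your explicit balanced-digit non-vanishing argument and your remark on the ceiling/rounding slack in the final estimates are slightly more careful than the paper's phrasing, but the underlying proof is identical.
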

\begin{proof} Consider the following sequence of elements of $G$:
$$\bfa:= (1,\ldots,0),((r+1),\ldots,0)\cdots,((r+1)^{t_1},\ldots,0),
\ldots,(0,\ldots,1),(0,\ldots,(r+1)),\ldots,(0,\ldots,(r+1)^{t_s}),$$
where $t_i$ is defined by $(r+1)^{t_i+1}\leq n_i < (r+1)^{t_i+2}$
for $1\leq i\leq s$. If $t$ is such that $(r+1)^{t+1}\leq n < (r+1)^{t+2}$
then by the choice of $t$, all integers of the form $a_0+a_1(r+1)+\cdots+a_t(r+1)^t$ 
where $a_i\in[r]$ are strictly less than $n$, and are 
distinct in $\Z_n$, it follows that no element of the form 
$a_0+a_1(r+1)+\cdots+a_t(r+1)^t$ where $a_i\in A$ equals 
zero in $\Z_n$. In particular, it follows that the sequence 
$\bfa$ admits no non-trivial zero sum subsequence. 
Furthermore, since $\bfa$ has $\sum_{i=1}^s(t_i+1) = 
\sum_{i=1}^s{\left\lfloor\log_{r+1}n\right\rfloor}$ elements, 
we have $D_A(\Z_n) \geq \sum_{i=1}{\left\lfloor\log_{r+1}n\right\rfloor}+1$.

To prove the upper bound, consider a sequence 
$\bfx=(x_1,\ldots,x_t)$ of length $t= 
\sum_{i=1}^s{\left\lceil\log_{r+1}n\right\rceil} + 1,$
where $x_i=(\alpha_1^{(i)},\alpha_2^{(i)},\ldots, \alpha_s^{(i)})$ for $i=1,2,\ldots,t.$ 

Let  $$N = \bigg\{ \sum_{l=1}^{r} l\bfx_{I_l} : I_l\subseteq [s], I_i\cap I_j = \emptyset\textrm{\  for\ }  i\neq j\bigg\}.$$ 
Now if we show that $|N| \geq n_1\cdots n_s$ then it follows that 
$D_A(\Z_n) \leq t$. Indeed, since $|G|=n_1\cdots n_s$, 
it would follow that for some distinct collections of sets $\{I_j\}_{j\in [r]} ,\{J_j\}_{j\in [r]}$ with $I_i\cap I_j = J_i\cap J_j=\emptyset$ whenever $i\ne j$, we have $$\sum_{j=1}^{r} j\bfx_{I_j}=\sum_{j=1}^{r} j\bfx_{J_j}$$ as elements in $G$. But then this yields a relation of the form $\displaystyle\sum_{j=1}^r a_j \bfx_{I_j}=0$ with $a_j\in\{\pm1,\ldots,\pm r\}$ and that is what we seek.

It is now a straightforward exercise to check that 
$|N|= (r+1)^t$. Since $t= \sum_{i=1}^s{\left\lceil\log_{r+1}n\right\rceil}+1
>\sum_{i=1}^s\log_{r+1}n,$ we have $|N| \geq n_1\cdots n_s$ and the proof is complete.\end{proof}

We quickly return to a point we made in the introduction about the finiteness of the parameter $\fD_G(k)$. By propositions \ref{HxK}, \ref{fD_ZpxZp}, \ref{pm1-pmr}, it is easy to see that when $k$ is not `too small' (and we shall prefer to be somewhat vague about what `small' means exactly, though it can easily be expounded in more precise terms), we necessarily have $\fD_G(k)<\infty$. But as we shall see, the bound in proposition \ref{pm1-pmr} is far from best possible, even in the case when $G$ is a cyclic group of prime order.

Before we conclude this section, we make one other digressive remark.  If $A\subset [1,\exp(G)-1]$ is somewhat `large', (so that $D_A(G)$ is `small') then it is probably tempting to conclude that one must have $D_B(G)$ large where $B=[1,\exp(G)-1]\setminus A$; that is however false, as the following example shows. 

Let $p$ be prime and consider $G=\Z_p, A_r=\{\pm 1,\pm 2,\cdots,\pm r\}$, and $B_r= \Z_n\setminus \{0,\pm 1,\pm 2,\cdots,\pm r\}$. We claim that $D_{B_r}(\Z_p)\leq 2$ for $r < \frac{p-1}{4}$. Then by the previous proposition, for somewhat large $r$, say $r=\Omega(p)$, we have $D_{A_r}(\Z_p)=D_{B_r}(\Z_p)=2$.

To see this,  consider the sequence $\bfx:=(1,\alpha)$ with $\alpha \neq 0$. If $\alpha \in \frac{1}{i}B_r$ for some $i$ satisfying $(r+1)\leq i\leq p-(r+1)$, then it is easy to see that there exist $a,b\in B$ such that $a.1+ b.\alpha=0$. So the only interesting case is when $\displaystyle\alpha\in \bigcap_{i=r+1}^{p-(r+1)} \frac{1}{i}A_r$. The main observation now is that if $r< \frac{p-1}{4}$, then $\bigcap_{i=r+1}^{p-(r+1)} \frac{1}{i}A_r=\emptyset$. Indeed, consider an array whose rows are indexed by the elements of $B_r$, the columns by the elements of $A_r$, and whose $(i,j)^{th}$ entry is $j/i$. An element of the intersection corresponds to picking an transversal for this array, i.e., a set of entries, one from each row, such that no two chosen elements are in the same column. But this is impossible if $p-2r-1>2r$, i.e., if $r<\frac{p-1}{4}$.

\section{Proofs of theorem \ref{fD_prime} and theorem \ref{2and4}}
In this section, we prove theorems \ref{fD_prime} and \ref{2and4}, which shall appear in the two subsections of this section. 

The main idea behind the proof of theorem \ref{fD_prime} is to consider random sets $A$. To make this more specific, suppose $0<\theta<1$. By a \textit{$\theta$-random subset of $[a,b]$}, we mean a random subset $A\subseteq [a,b]$ obtained by picking each $i\in[a,b]$ independently with probability $\theta$. Also, for a probability space we say that a sequence of events $\E_n$ occurs  \textit{with high probability} (abbreviated as \textit{whp}) if $\displaystyle\lim_{n\to\infty} \bP(\E_n)=1$. In our results, the parameter $n$ will be clear from their corresponding contexts, so we do shall not draw attention to it explicitly.

Before we state our main result more precisely, we note that  one can prove a more general upper bound for $\fD_G(k)$ for \textit{all} abelian groups. In fact, the following proposition also shows that $\fD_G(k)$ is a relevant problem only for $k\le \lceil\log_2 |G|\rceil+1$. 
\begin{prop}\label{prop1}Suppose $G$ is a finite abelian group of exponent $n$, and let $A$ be a $\theta$-random subset  of $[n-1]$, where $\theta\ge \frac{\omega(n)}{\sqrt{n}}$, where $\omega(n)\to\infty$ as $n\to\infty$. Then $D_A(G)\leq {\left\lfloor\log_{2}|G|\right\rfloor}+1$ \textit{whp}. 
\end{prop}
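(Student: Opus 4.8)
The plan is to show that, with high probability, a single structural feature of $A$ — that it contains a \emph{symmetric pair} $\{a,\,n-a\}$ — already forces $D_A(G)\le k$, where $k:=\lfloor\log_2|G|\rfloor+1$ and $n:=\exp(G)$. The point of isolating such a deterministic-looking sufficient condition is that it decouples the randomness from the (infinitely many) sequences one must handle, so that no union bound over $G$-sequences is needed.

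First I would record the deterministic pigeonhole step, valid for \emph{every} $G$-sequence. Since $k=\lfloor\log_2|G|\rfloor+1$ we have $2^k>|G|$, so among the $2^k$ subset sums $\bfx_I$ ($I\subseteq[k]$) of any length-$k$ sequence $\bfx$ two must coincide: there are $I\ne J$ with $\bfx_I=\bfx_J$. Passing to $I':=I\setminus J$ and $J':=J\setminus I$ yields disjoint index sets, not both empty, with $\bfx_{I'}=\bfx_{J'}$.

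Next comes the weight-conversion step, where the exponent hypothesis is used. Suppose $a\in A$ with $n-a\in A$. Because $\exp(G)=n$ we have $nx=0$ for every $x\in G$, hence $(n-a)x=-ax$. Assigning weight $a$ to each index in $I'$ and weight $n-a$ to each index in $J'$ therefore produces
$$ \sum_{i\in I'} a\,x_i + \sum_{j\in J'} (n-a)\,x_j \;=\; a\big(\bfx_{I'}-\bfx_{J'}\big) \;=\; 0, $$
a nontrivial $A$-weighted zero-sum subsequence (its support $I'\cup J'$ is nonempty, and when one of $I',J'$ is empty the relation degenerates harmlessly to $a\,\bfx_{I'}=0$ or $(n-a)\,\bfx_{J'}=0$). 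Thus, as soon as $A$ contains a single symmetric pair, every length-$k$ sequence admits an $A$-weighted zero-sum, i.e. $D_A(G)\le k$.

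It then remains to estimate $\bP(A\text{ contains no symmetric pair})$. Partitioning $[1,n-1]$ into the at least $(n-2)/2$ disjoint pairs $\{a,n-a\}$ (with a lone singleton $n/2$ when $n$ is even), the events ``$\{a,n-a\}\subseteq A$'' are mutually independent across distinct pairs and each has probability $\theta^2$, so the probability that none occurs is at most $(1-\theta^2)^{(n-2)/2}\le e^{-\theta^2(n-2)/2}$. The hypothesis $\theta\ge \omega(n)/\sqrt n$ gives $\theta^2(n-2)/2\ge \omega(n)^2(n-2)/(2n)\to\infty$, so this probability tends to $0$, completing the argument. The step worth flagging is precisely this decoupling: the naive route of union-bounding over all $\approx|G|^k$ sequences would demand $\theta\gg\sqrt{(\log n)/n}$, strictly stronger than the stated threshold, whereas the symmetric-pair reformulation is what lets us get away with the weaker, optimal-order hypothesis $\theta\gg 1/\sqrt n$.
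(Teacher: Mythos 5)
Your proposal is correct and follows essentially the same route as the paper: the same pigeonhole on the $2^k>|G|$ subset sums, the same use of a symmetric pair $\{a,n-a\}$ (the paper's $\{x,n-x\}$) to convert the coincidence $\bfx_{I'}=\bfx_{J'}$ into an $A$-weighted zero-sum, and the same independence-over-disjoint-pairs estimate $(1-\theta^2)^{\Theta(n)}\to 0$. No substantive differences to report.
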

\begin{proof}
Suppose the set $A$ contains $x,n-x$, for some $x \in [1,n-1]$. Let $\bfy=(y_1,\ldots,y_s)$ be a $G$-sequence, and suppose $s> \log_{2}|G|.$
Consider the set  $\displaystyle\big\{\bfy_I : I\subseteq [s]\big\}$; as $I$ varies over all subsets of $[s]$ and as there are $2^s> |G|$ such summands, it follows that there exist $ J_1, J_2 \subseteq [s]$ with $J_1 \neq J_2$ such that $\bfy_{J_1} = \bfy_{J_2}.$
Set $J = (J_1\cup J_2) \setminus (J_1\cap J_2)$ and define the sequence $\bfa$ by setting $a_j = x$ for $j\in J_1\setminus J_2$ and $a_j = n-x$ if $j\in J_2\setminus J_1.$ Then  clearly,  $\langle\bfy,\bfa\rangle_J=0$, so it follows that for such $A$, we have $D_A(G)\le\log_2 |G|$.

Let $A$ be a $\theta$-random subset of $[1,n-1]$. For each $x\in [1,n-1]$, let $\E_x$ denote the event that both $x,n-x$ are in $A$, and let $\E=\bigwedge_x \overline{\E_x}$.  Since $A$ is $\theta$-random, it follows that $$\mathbb{P} (\E) = (1- \theta^2)^{\frac{n-1}{2}} \leq e^{-\theta^2 (n-1)}.$$ 
By assumption, $\theta\gg1/\sqrt{n}$, so it follows that $\bP(\E)\to 0$ as $n\to\infty$ and so we are done.\end{proof}
{\bf Remark:} A quick consequence of proposition \ref{prop1} is the following: Set $\theta=\frac{n^{\ep}}{\sqrt{n}}$. Then \textit{whp} a $\theta$-random subset of $[1,n-1]$ satisfies $D_A(G)\le\lfloor\log_2 |G|\rfloor+1$. In particular, for any $k\le\lfloor\log_2 |G|\rfloor+1$, we have $\fD_G(k)\le n^{1/2+\ep}$, for any $\ep>0$.

\subsection{Proof of theorem \ref{fD_prime}}
As mentioned earlier, the proof of theorem \ref{fD_prime} involves studying $D_A(\Z_p)$ for random $A\subset[1,p-1]$. Our main probabilistic tool here is Janson's inequality (see \cite{AlSp}, for instance).  The version of the inequality that we shall use, is given below, for the sake of completeness.

Suppose $\Omega$ is a finite set, and let $R$  a random subset of $\Omega$ where each $r \in \Omega$ is chosen independently with probability $p_r$. Let $A_i\subset\Omega$ for $i=1,2\ldots, t$, and let $\E_i$ denote the event: $A_i\subset R$.  Let $\displaystyle N=\#\{i : A_i\subset R\}, \mu:=\bE(N),\ \Delta:=\sum_{i \sim j} \bP(\E_i \wedge \E_j),$  where we write $i\sim j$ if $A_i\cap A_j\neq\emptyset$. Then one of the forms of Janson's inequality states that $\bP(N=0)\le\exp(-\mu+\frac{\Delta}{2})$, and this is what we shall use. 

The following theorem in this subsection details the nature of $D_A(\Z_p)$ when $A$ is $\theta$-random, for prime $p$:  
\begin{thm}\label{randA} Suppose $p$ is a prime and $A$ is a $\theta$-random subset of $[1,p-1]$. Let $\omega(p),\omega'(p) $ be arbitrary functions satisfying $\omega(p),\omega'(p)\to\infty$ as $p\to\infty$. Also, suppose $p$ is sufficiently large.
\begin{enumerate}
\item If $\theta>\sqrt{\frac{2\log p + \omega(p)}{p}}$, then  \textit{whp} $D_A(\Z_p)=2$.
\item If $k\ge 3$ is an integer and $\theta$ satisfies 
$$\frac{\left(3kp(\log p+\omega(p))\right)^{1/k}}{p} 
< \theta<\frac{p^{1/(k-1)}}{p\ \omega'(p)},$$ then \textit{whp} 
$D_A(\Z_p)=k$.
\end{enumerate}
\end{thm}
{\bf Remark:} One could have incorporated the first part of theorem \ref{randA} into the second more general part, but we state the theorem as we do, because the proof of the first part is simpler, and motivates and elucidates the general strategy better; the only difference comes in the finer details. 

It is clear that theorem \ref{randA} implies the result of theorem \ref{fD_prime}. Also, theorem \ref{randA} is clearly not tight as in that the theorem makes a statement only for $\frac{(p\log p)^{1/k}}{p}\ll \theta\ll\frac{p^{1/(k-1)}}{p}$. The constant $3$ in the statement of the theorem is definitely not tight (even by our method of proof), but we make no attempt to improve it to find the best possible constant in order to make the presentation more lucid.

\begin{proof} 
\begin{enumerate}
\item First observe that $D_A(\Z_n) \geq 2$ follows trivially by considering the sequence $\bfx= (1)$.  Fix a sequence $\bfx = (x_1,x_2)$ of length $2$ in $\Z_p$. Without loss of generality, we may assume that both $x_i\in\Z_p^*$.  Write  $u= x_2/x_1\neq 0$.\\

For this given sequence, consider the graph $G_u = (V_u, E_u)$, where $V_u= Z_p^*$ and for $a,b \in V_u$, $(a,b)\in  E_u$
if and only if $a=-bu$ or $b=-au$. If $A\subset[1,p-1]$ is regarded a subset  of the vertex set of $G_u$, and if $A$ is not independent in $G_u$, then by the definition of $G_u$, it follows that the sequence $(x_1,x_2)$ admits a pair $a,b\in A$ such that $ax_1+bx_2=0.$

Suppose $A$ is a $\theta$-random subset of $[1,p-1]$ and let $N_u=|\{ e\in E_u: e\subset A \}|.$ Since each vertex of $G_u$ has degree $2$, $G_u$ is a union of cycles, so it is straightforward to see that  $$\bE(N_u)=(p-1)\theta^2, \hspace{0.3cm}\varDelta:=\sum_{\substack{e\cap e'\ne\emptyset\\e\ne e'\in E_u}} \bP(e,e'\subset A)= (p-1)\theta^3.$$ By Janson's Inequality we have 
$\mathbb{P}(N_u=0)\leq e^{-(p-1)\theta^2(1-\theta/2)}$. Hence by the hypothesis on $\theta$,  
$$\bP(\textrm{There\ exists\ }u\in\Z_p^*\textrm{\ such\ that\ }N_u=0)\le \exp(-(p-1)\theta^2(1-\theta/2)+\log p)\le \exp\left(-\omega(p)\right),$$ and that completes the proof.
 

\no

\item The proof of this part is very similar to the proof of part 1, with the crucial difference being that rather than evaluate $\mu, \Delta$ precisely (which is messy), we shall use appropriate bounds in this case.  

Let $\X$ be the set of all $k$-tuples $\bfx=(x_1,\ldots,x_k)$  of elements in $\Z_p$  such that $\bfx_I\ne 0$ for all non-trivial subsets $I\subset [k]$. 

Call a $k$-tuple $\bfa=(a_1,\ldots,a_k )$ of elements in $\Z_p^*$  \textit{good} if $\langle\bfx,\bfa\rangle =0$. 
For each $i\in [k]$, let $\N_i$ denote the set of good $k$-tuples $(a_1,\ldots,a_k)$ with exactly $i$ distinct $a_j$'s, and let $n_i=|\N_i|$. We claim that for $i<k, n_i=O_k(p^{i-1})$, and that for $n_k\ge (p-1)(p-2)\cdots(p-k+1) - O_k(p^{k-2})$.

To see why, note that every good $k$-tuple in $\N_i$ corresponds to a partition $[k]=\cup_{j=0}^{i-1},I_{j}$  into $i$ non-empty, disjoint sets, and for each such partition of $[k]$, the equation $a \bfx_{I_0}+b_1 \bfx_{I_1}+\cdots+b_{i-1} \bfx_{I_{i-1}}=0$ admits a unique solution for $a$ whenever we fix choices for $b_1,\ldots,b_{i-1}\in \Z_p^*$. For $i=k$, again, for distinct choices of $b_1,\ldots,b_{k-1}$, the equation $ax_1+b_1x_2+\cdots+b_{k-1}x_k=0$ admits a unique solution for $a\in\Z_p$ and the only $k$-tuples of this sort that do not lie in $\N_k$  must correspond to the case where $a=0$ (in which case $(b_1,\ldots,b_{k-1})$ is a good $(k-1)$-tuple), or if $a=b_i$ for some $1\le i<k$, in which case these $k$-tuples are in $\N_{k-1}$, and either way, the lower bound mentioned above is satisfied. 

Let $N_{\bfx}$ denote the number of good $k$-tuples for $\bfx$ arising from the  $\theta$-random set $A$.  Then $$\mu =\bE(N_{\bfx})= \sum_{i=1}^k n_i \theta^{i}\ge \frac{1}{2}p^{k-1}\theta^k,$$ by the discussion above, for $p$ sufficiently large.

To compute $\Delta$, we set up a little additional notation. For $k$-tuples $\bfa,\bfb$, we write $\bfa\sim\bfb$ if there is some common element (not necessarily in the same position) in the sequences, and by $|\bfa\cap\bfb|$ we shall denote the number of common elements in the two $k$-tuples.  

First, observe that 
\begin{eqnarray*} 
\Delta=\sum_{\substack{\bfa\sim\bfb\\ \bfa,\bfb\textrm{\ good}}} \bP(\bfa,\bfb\subset A)&=& \sum_{i=2}^k\sum_{\substack{\bfa\sim\bfb\\ \bfa,\bfb\in\N_i}}\bP(\bfa,\bfb\subset A) \\
             &=&\sum_{i=2}^k\sum_{j=1}^{i}\#\big\{(\bfa,\bfb)\in\N_i\times\N_i: |\bfa\cap\bfb|=j\big\}\theta^{2i-j}\label{Delta}
\end{eqnarray*}
To bound this, we note that $\#\big\{(\bfa,\bfb)\in\N_i\times\N_i: |\bfa\cap\bfb|=j\big\}=O_k(p^{2i-j-2}).$ Indeed, there are $n_i=O_k(p^{i-1})$ choices for $\bfa\in\N_i$ and for a fixed $\bfa$ and for a certain fixed subset $T$ of size $j$ among the $i$ distinct elements of $\bfa$, there are at most $p^{i-j-1}$ sequences $\bfb\in\N_i$ such that $\bfb$ also contains the elements of $T$. Also, for $i=k, j=1$, the number of pairs $(\bfa,\bfb)$ is at most $k\theta^{2k-1}p^{2k-3}$. It is now a simple check to see that for $p$ sufficiently large, we have $\Delta\le 2k\theta^{2k-1}p^{2k-3}$.

Consequently, by Janson's Inequality 
$$\bP(N_{\bfx}=0) \leq e^{-\mu +\frac{\varDelta}{2}}\le \exp\left(-\frac{1}{2}\theta^kp^{k-1}+2k\theta^{2k-1}p^{2k-3}\right).$$
So, again as before, 
\begin{eqnarray*}\bP(\textrm{There\ exists\ }\bfx\textrm{\ such\ that\ }N_{\bfx}=0)&\le&\exp\left(-\frac{1}{2}\theta^kp^{k-1}+2k\theta^{2k-1}p^{2k-3}+k\log p\right)\\ &<&\exp\left(-C\omega(p)\right)\end{eqnarray*} for some constant $C>0$, by the bounds on $\theta$.

For the final part of the theorem, consider the sequence $\mathbf{1}_{k-1}:=(\underbrace{1,\ldots,1}_{k-1\textrm{\ times}})$ and let $T_{k-1}$ denote the number of $k-1$-tuples $\bfa=(a_1,\ldots,a_{k-1})\in \Z_p^{k-1}\setminus\{\bfzero_{k-1}\}$ satisfying $\langle\mathbf{1}_{k-1},\bfa\rangle=0$.  By the arguments outlined earlier, it is not hard to see that 
$$\bE (T_{k-1})\le kp^{k-2}\theta^{k-1}<\frac{k}{\omega'(p)^{k-1}}$$
by the hypothesis on $\theta$. In particular, it follows that $\bE(T_{k-1})\to 0$ as $p\to\infty$, and from this, it follows that with high probability, there exists no $\bfa\in (A\cup\{0\})^{k-1}\setminus\{\bfzero_{k-1}\}$ for which the sequence $\mathbf{1}_{k-1}$ admits an $A$-weighted zero sum subsequence. This completes the proof of theorem \ref{randA}.
\end{enumerate}\end{proof}
\subsection{Proof of theorem \ref{2and4}}
\begin{proof}
\begin{enumerate} 
\item For this part, we recall the notion of a Difference set in an abelian group (see \cite{BJL}):
\begin{defn}{\bf Difference Set:} Suppose $G$ is an abelian group of order $v$. A set $D\subset  G\setminus\{0\}$ is called a $(v,k,\lambda)$ difference set if 
\begin{enumerate}
\item $|D|=k$ and
\item for each $g\in G, g\ne 0$, there are exactly $\lambda$ pairs $(d,d')\in D\times D$ such that $d-d'=g$.\end{enumerate}
\end{defn}
If $\lambda =1$, then $D$ is called a perfect difference set.

The necessary tool  for us is a classical result due to Singer (\cite{Sin}):
\begin{thm} (Singer, \cite{Sin}) Suppose $n=q^2+q+1$ for $q$ prime, then the cyclic group  $G= \Z_n$ admits a perfect difference set of size $q+1$.\end{thm}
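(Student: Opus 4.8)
The plan is to realize $\Z_n$ as the point set of the projective plane $PG(2,q)$ on which a Singer cycle acts, and to take a single line as the difference set. Concretely, I would work inside the field $\bF_{q^3}$ (all that is needed is that $q$ is a prime power, so that this field exists). Its multiplicative group $\bF_{q^3}^{*}$ is cyclic of order $q^3-1=(q-1)n$. Fix a generator $\theta$; then $\theta^{n}$ generates the unique subgroup of order $q-1$, which is exactly $\bF_q^{*}$. The $n$ cosets $\theta^{i}\bF_q^{*}$ for $i\in\Z_n$ are pairwise distinct, since $\theta^{i}\bF_q^{*}=\theta^{j}\bF_q^{*}$ iff $\theta^{i-j}\in\bF_q^{*}$ iff $n\mid i-j$; these cosets are precisely the $1$-dimensional $\bF_q$-subspaces of $\bF_{q^3}$, i.e.\ the points of $PG(2,q)$. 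This identifies the points with $\Z_n$, with multiplication by $\theta$ acting as the shift $i\mapsto i+1$.

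Next I would fix a $2$-dimensional $\bF_q$-subspace $W\subset\bF_{q^3}$ (a line of the plane), for instance $W=\bF_q\cdot 1+\bF_q\cdot\theta$, and set $D=\{\,i\in\Z_n:\theta^{i}\in W\,\}$ (equivalently $\theta^{i}\bF_q^{*}\subseteq W$, since $W$ is an $\bF_q$-subspace). Because $W\setminus\{0\}$ has $q^{2}-1$ elements partitioned into $(q^{2}-1)/(q-1)=q+1$ cosets of $\bF_q^{*}$, exactly $q+1$ of the $n$ points lie in $W$, so $|D|=q+1$ as required.

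To verify the difference-set property, fix $g\in\Z_n$ with $g\neq 0$ and count pairs $(i,j)\in D\times D$ with $i-j\equiv g$. Writing $\theta^{i}=\theta^{g}\theta^{j}$, such a pair corresponds to an element $\theta^{i}\in W$ with also $\theta^{i}\in\theta^{g}W$, i.e.\ to a point of the intersection $W\cap\theta^{g}W$ of the two lines $W$ and $\theta^{g}W$; this correspondence is a bijection. Two distinct $2$-dimensional subspaces of a $3$-dimensional space meet in a $1$-dimensional subspace, i.e.\ in exactly one point, so there is exactly one such pair, giving $\lambda=1$ — provided $\theta^{g}W\neq W$.

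The one genuinely delicate point, which I expect to be the main obstacle, is exactly this: showing $\theta^{g}W\neq W$ for every $g\not\equiv 0\pmod n$, which is the assertion that the Singer shift acts with no nontrivial stabilizer. Suppose $xW=W$ for some $x\in\bF_{q^3}^{*}$. Multiplication by $x$ is $\bF_q$-linear and preserves $W$, so $W$ is invariant under the subfield $\bF_q(x)$. Since $[\bF_{q^3}:\bF_q]=3$ is prime, either $x\in\bF_q$ or $\bF_q(x)=\bF_{q^3}$; in the latter case $W$ would be an $\bF_{q^3}$-subspace of $\bF_{q^3}$, hence $0$ or all of $\bF_{q^3}$, contradicting $\dim_{\bF_q}W=2$. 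Therefore $x\in\bF_q^{*}$, so $\theta^{g}W=W$ forces $\theta^{g}\in\bF_q^{*}$, i.e.\ $n\mid g$. With this in hand the count above yields exactly one representation of each nonzero $g$, so $D$ is a perfect $(n,q+1,1)$ difference set. As a consistency check, $|D|(|D|-1)=(q+1)q=q^{2}+q=n-1$ equals the number of nonzero elements of $\Z_n$, matching each being represented exactly once.
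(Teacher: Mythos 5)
Your proof is correct. The paper does not prove this statement at all --- it is quoted as a classical result and attributed to Singer's 1938 paper --- so what you have supplied is a complete, self-contained proof of the cited theorem, and it is in fact the standard Singer construction: identify the point set of $PG(2,q)$ with $\Z_n$ via the cosets $\theta^i\bF_q^*$ in $\bF_{q^3}^*$, take a line $W$ as the difference set, and reduce the counting of representations of $g\neq 0$ to the fact that two distinct planes in a $3$-dimensional space meet in exactly one projective point. You correctly isolate and handle the only delicate step, namely that $\theta^gW=W$ forces $\theta^g\in\bF_q^*$; the argument via $\bF_q(x)$ and the primality of $[\bF_{q^3}:\bF_q]=3$ is clean and complete. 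Your observation that only ``$q$ a prime power'' is needed is also right, so your statement is slightly more general than the one quoted in the paper (which restricts to $q$ prime, presumably because that is all that is used there). Two cosmetic remarks: with your choice $W=\bF_q\cdot 1+\bF_q\cdot\theta$ you have $0\in D$, whereas the paper's definition insists $D\subset G\setminus\{0\}$; this is harmless since differences are translation-invariant, so replacing $D$ by a translate $D+c$ (or choosing a line not through the point $\bF_q^*$) meets the letter of the definition. Also, it is worth stating explicitly that multiplication by $\theta^g$ is an invertible $\bF_q$-linear map, so $\theta^gW$ is again a $2$-dimensional subspace --- you use this implicitly in the intersection count.
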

First by theorem \ref{bndsZp^m} we have $f^{(D)}(p,2)\geq\sqrt{p}-1$. However, a closer inspection of the same proof for the case $k=2$ reveals that the corresponding set $\X$ (in the proof of theorem \ref{bndsZp^m}) consists of all pairs $(a_1a_2)\in A^2$ itself, so we actually have a (slightly) better bound, viz., $\fD(p,2)\ge\sqrt{p-1}$, so $ f^{(D)}(p,2) \geq \left\lceil\sqrt{p-1}\right\rceil.$  

Suppose $p=q^2+q+1$ and let (by Singer's theorem)  $D\subset\Z_p^*$ be a perfect difference set of size $q+1$, and set $A=\{\theta^i : i\in D \}$, where $\theta$ is a primitive element of $\Z_p^*$. We claim that $D_A(\Z_p)=2$, so that this establishes that  $f^{(D)}(p,2) \leq \left\lceil\sqrt{p-1}\right\rceil$ and completes the proof.\\

In order to show that $D_A(\Z_p)=2$, it suffices to show that for every $u\in\Z_p^*$, the sequence $(1,u)$ admits a pair $(a_1,a_2)\in A^2$ such that $a_1+u a_2=0$. Write $-u=\theta^{i_u}$ for a unique $i_u\in [1,n^2+n]$, and since $D$ is a perfect difference set, write $i_u=d_1-d_2$ for a unique pair $(d_1,d_2)$ in $D$. Then if we set $a_i=\theta^{d_i}$ then we have $-u=a_1/a_2$, or equivalently, $a_1+ua_2=0$. 

\item For $k=2$, and all primes $p$, we now prove the more general bound $\fD(p,2)\le 2\sqrt{p}-1$. Again, we ignore the ceiling/floor notation for simplicity.  The following simple observation is key. In what follows, if $A$ is a set containing $0$ then by $A_*$ we shall mean $A\setminus\{0\}$.
\begin{obs}\label{observ} Suppose $A,B\subset\Z_p^*$ and satisfy $|A|\cdot |B|>p$. Then $\frac{B-B}{(A-A)_*}=\Z_p$. \end{obs}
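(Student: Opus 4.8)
The plan is to prove that if $A, B \subset \Z_p^*$ with $|A|\cdot|B| > p$, then $\frac{B-B}{(A-A)_*} = \Z_p$. First I would fix an arbitrary target element $g \in \Z_p$ and show that there exist $a, a' \in A$ with $a \neq a'$ and $b, b' \in B$ such that $b - b' = g(a - a')$; this is exactly the statement that $g \in \frac{B-B}{(A-A)_*}$. The natural approach is a counting/pigeonhole argument: consider the set of $|A|\cdot|B|$ elements $\{b - g a : a \in A, b \in B\}$ in $\Z_p$. Since there are $|A|\cdot|B| > p$ such elements and only $p$ residues, by pigeonhole two of them must coincide, say $b - ga = b' - ga'$ with $(a,b) \neq (a',b')$.

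Next I would argue that this collision yields $b - b' = g(a - a')$, and I must rule out the degenerate case $a = a'$. If $a = a'$, the equation forces $b = b'$, contradicting $(a,b) \neq (a',b')$; hence $a \neq a'$, so $a - a' \in (A-A)_*$. Rearranging gives $g = \frac{b-b'}{a-a'} \in \frac{B-B}{(A-A)_*}$ (the denominator is a nonzero element of $\Z_p$, hence invertible since $p$ is prime). Because $g$ was arbitrary, this shows $\Z_p \subseteq \frac{B-B}{(A-A)_*}$, and the reverse containment is trivial, giving equality.

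The only subtlety worth flagging is the case $g = 0$, where the construction above would give $b = b'$ after the collision; here one should instead note directly that $0 \in B - B$ (take any $b = b'$) while $(A-A)_*$ is nonempty (since $|A| \geq 2$, which follows from $|A||B| > p \geq |B|$ forcing $|A| > 1$), so $0 = \frac{b-b}{a-a'} \in \frac{B-B}{(A-A)_*}$ trivially. The main obstacle — really the only thing requiring care — is ensuring the pigeonhole collision produces genuinely distinct $a$-values rather than a trivial coincidence; handling this via the two cases above disposes of it cleanly. The whole argument is elementary pigeonhole, and I expect no deeper difficulty.
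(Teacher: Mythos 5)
Your proof is correct and is essentially the same as the paper's: the paper also fixes $x\in\Z_p^*$ and applies pigeonhole to the map $(a,b)\mapsto ax+b$ on $A\times B$, deducing a collision with $a\ne a'$ exactly as you do. Your explicit treatment of the target $g=0$ (which the paper's argument, restricted to $x\in\Z_p^*$, leaves implicit) is a small but welcome bit of extra care.
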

To prove the observation, for each $x\in\Z_p^*$, consider the map $\phi_x:A\times B\to\Z_p^*$ given by $\phi_x(a,b):=ax+b$. Then by the assumption that $|A|\cdot |B|>p$ it follows that this map is not injective, so there exist pairs $(a,b)\neq (a',b')$ such that $\phi_x(a,b)=\phi_x(a',b')$. By the definition of $\phi_x$, this implies that $a\ne a'$, which then implies that $x=\frac{b'-b}{a-a'}\in\frac{B-B}{(A-A)_*}$.

Consider the set $A=[-\sqrt{p},\sqrt{p}]_*$. Then note that $A=(B-B)_*$ where $B=[1,\sqrt{p}+1]$. Since $|B|^2>p$, by the observation above, we have $\frac{A}{A}=\frac{(B-B)_*}{(B-B)_*}=\Z_p^*$. This implies that for every $u\in\Z_p^*, -u\in \frac{A}{A}$. Since $|A|\le 2\sqrt{p}-1$, that completes the proof.

\item Before we start with the proof of the 3rd part of this theorem, we shall state a reformulation of what we seek:  For any $k\ge 1$, to find an upper bound for  $\fD(p,2k)$, it suffices to construct a set $A\subset\Z_p^*$ of an appropriate size such that for any $\alpha_1,\ldots,\alpha_{k-1},\beta_1,\ldots,\beta_{k-1}\in\Z_p^*$  
\begin{eqnarray*}\Z_p^*\subset \frac{A+\alpha_1 A+\cdots+\alpha_{k-1}A}{A+\beta_1 A+\cdots+\beta_{k-1}A}.\end{eqnarray*}
To see why, note that $D_A(\Z_p)=k$ implies that for any $x_i\in\Z_p^*, 1\le i\le k,$ we have $0\in Ax_1+\cdots+Ax_{2k}$, or equivalently, 
$$-\frac{x_1}{x_{k+1}} = \frac{a_{k+1}+a_{k+2}(x_{k+2}/x_{k+1})+\cdots+a_{2k}(x_{2k}/x_{k+1})}{a_1+a_2(x_2/x_1)+\cdots+a_k(x_k/x_1)},$$ and if the aforementioned condition holds, then this is indeed satisfied. 
So in order to show that $\fD(p,4)\le O(p^{1/4})$, it suffices to construct a set $A$ with $|A|\le Cp^{1/4}$ for some constant $C>0$ such that 
$$\mathbb{F}_p^* \subset \frac{A+\alpha A}{A+\beta A}$$
for all $\alpha, \beta \in \mathbb{F}_p^*$.

For an integer $t$, let $X_t:=(t[-L,L])=\{-Lt,\ldots,-t, 0, t,\ldots,Lt\}$ with $L=C_0p^{1/4}$, for some large constant $C_0$. The following properties of $X_t$ are evident:
\begin{enumerate}
\item[i.] $\alpha X_t=X_{\alpha t}$. 
\item[ii.] For $s\ne t$, $X_s+X_t$ contains a subset $Y_{s,t}$ of size at least $|X_s+X_t|/4$ such that $Y_{s,t}-Y_{s,t}\subset X_s+X_t$. This follows from the simple fact that this is a \textit{generalized arithmetic progression (GAP)} of rank $2$, and this is a general property of GAPs. (see for instance \cite{TaoVu}, chapter 2. However, this property is an easily verified thing, and does not need any specialized tools).
\end{enumerate}

Our set $A$ will be of the form $A=\displaystyle\left(\bigcup_{t\in J}X_t\right)_*$ for some set $J$ with $1\in J$ and $|J|=O(1)$. We shall notate $X_1$ by $I$ for convenience. The bound on $|J|$ also will be specified  later as a function of $C_0$. 

What we shall do more specifically is to exhibit such an $A$ (for a suitable set $J$) that satisfies the following property: For each $\alpha\in\Z_p^*$, the set $A+\alpha A$ contains a subset of the form $I+X_t$ of size greater than $4p^{1/2}$. Then by property (ii) listed above, there exists $\tilde{Y_{t}}\subset I+X_t$ with $|\tilde{Y_{t}}|>p^{1/2}$ such that $\tilde{Y_{t}}-\tilde{Y_{t}}\subset I+X_t\subset A+\alpha A$.  Since this holds for each $\alpha\in\Z_p^*$, observation \ref{observ} implies that $\frac{A+\alpha A}{A+\beta A}\supset\Z_p^*$.

So, what we need to do is to exhibit a set $J$, and $A=\displaystyle\bigcup_{t\in J}X_t \setminus\{0\}$ such that the aforementioned property is satisfied.

At this juncture, we need a lemma and in order to state that we shall introduce some further terminology. For integers $\xi,\eta\in [1,p-1]$, we shall regard the sets $[-\xi,\xi]=\{-\xi,-\xi+1,\ldots,-1,0,1,\ldots\xi-1,\xi\}$ and $[1,\eta]=\{1,2,\ldots,\eta\}$ as subsets of $\Z_p$. Define
 $$S(\xi,\eta):=\frac{[-\xi,\xi]_*}{[1,\eta]}.$$
We need introduce one further bit of terminology. For a given $t\in\Z_p^*$, we say that $\alpha\in\Z_p^*$ is good for $t$ if $|(I+\alpha X_t)_*|\ge \frac{L^2}{400}$. As before, $I=X_1=[-L,L]$.
\begin{lem}\label{small} Suppose $\alpha\in\Z_p^*$ is not good for $t$, then $\alpha\in t^{-1}S(2L,\frac{L}{100})$.
\end{lem}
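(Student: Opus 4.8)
The plan is to reduce the statement to an additive-combinatorial pigeonhole argument. First I would set $\beta := \alpha t$, so that by property (i) of the $X_t$'s we have $I + \alpha X_t = [-L,L] + \beta[-L,L]$, and the hypothesis that $\alpha$ is not good for $t$ becomes $|[-L,L]+\beta[-L,L]| < \tfrac{L^2}{400} + 1$ (the extra $+1$ accounting for the deleted $0$ in $(I+\alpha X_t)_*$). The target conclusion $\alpha \in t^{-1}S(2L, \tfrac{L}{100})$ is equivalent, after multiplying through by $t$, to producing a representation $\beta = \frac{x}{y}$ with $x \in [-2L,2L]_*$ and $y \in [1, \tfrac{L}{100}]$.

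Next I would observe that a small sumset forces collisions of a special kind. There are $(2L+1)^2$ pairs $(a,b) \in [-L,L]^2$ mapping under $(a,b)\mapsto a + \beta b$ into a set of size less than $\tfrac{L^2}{400}+1$, so many collisions occur. Any collision $a_1 + \beta b_1 = a_2 + \beta b_2$ with $(a_1,b_1)\ne(a_2,b_2)$ yields $\beta = \frac{a_1 - a_2}{b_2 - b_1}$, with numerator in $[-2L,2L]$; moreover $a_1 \ne a_2$ and $b_1 \ne b_2$ necessarily, since $\beta \ne 0$. The crux is to find such a collision with a \emph{small} denominator, namely $|b_1 - b_2| \le \tfrac{L}{100}$, rather than merely $|b_1-b_2| \le 2L$.

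To do this I would argue by contradiction: suppose every collision has $|b_1 - b_2| > \tfrac{L}{100}$. Fix a block $B \subseteq [-L,L]$ of consecutive integers of diameter at most $\tfrac{L}{100}$, say $B = [0, \lfloor L/100\rfloor]$. Then no two pairs $(a,b),(a',b')$ with $b,b' \in B$ can collide, so the map $(a,b)\mapsto a+\beta b$ is injective on $[-L,L]\times B$, and hence $|[-L,L] + \beta B| = (2L+1)|B| > \tfrac{L^2}{50}$. Since $[-L,L]+\beta B \subseteq [-L,L]+\beta[-L,L]$, this contradicts the bound $\tfrac{L^2}{400}+1$ once $L$ is large. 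Therefore some collision does have $0 < |b_1 - b_2| \le \tfrac{L}{100}$.

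Finally, from such a collision I would read off $\beta = \frac{a_1 - a_2}{b_2 - b_1}$; absorbing a sign into numerator and denominator I may take the denominator positive, giving $\beta \in \frac{[-2L,2L]_*}{[1,\, L/100]} = S(2L, \tfrac{L}{100})$, whence $\alpha = t^{-1}\beta \in t^{-1}S(2L,\tfrac{L}{100})$, as required. The one point needing care throughout is that $L = C_0 p^{1/4} \ll p$, so that $[-L,L]$, the differences in $[-2L,2L]$, and all the translates genuinely sit inside $\Z_p$ without wraparound; granting this, the only real work is the block-injectivity step, which is precisely where the passage from a mere collision to a collision with a \emph{controlled} denominator takes place, and where the numerical constants (the $100$ against the $400$) must be balanced to close the contradiction.
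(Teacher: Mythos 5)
Your proof is correct, but it takes a genuinely different route from the paper's. The paper first reduces (via $\alpha X_t = X_{\alpha t}$) to bounding $|I+X_t|$, then writes $I+X_t$ as a union of length-$2L$ intervals centered at the multiples $it$ and runs an iterative ``winding'' analysis: it counts how many rounds of these intervals can be placed pairwise disjointly as the multiples of $t$ wrap around $\Z_p$, and extracts from the first failed round a small multiple of $t$ landing in $[-2L,2L]$. Your argument replaces all of this with a single injectivity/pigeonhole step: writing $\beta=\alpha t$ and restricting the second coordinate of $(a,b)\mapsto a+\beta b$ to a block $B\subset[-L,L]$ of diameter at most $L/100$, you observe that a collision within $[-L,L]\times B$ directly produces a representation $\beta=(a_1-a_2)/(b_2-b_1)$ with numerator in $[-2L,2L]_*$ and denominator of absolute value at most $L/100$, while the absence of such a collision forces $|I+\alpha X_t|\ge (2L+1)|B|>L^2/50$, contradicting the hypothesis that $\alpha$ is not good for $t$. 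The bookkeeping you flag is all sound: $a_1\ne a_2$ and $b_1\ne b_2$ follow from $\beta\ne 0$, the sign can be absorbed so the denominator lies in $[1,L/100]$, and since $L=C_0p^{1/4}$ there is no wraparound among the integers involved. Your version is shorter, has transparent constants, and avoids the paper's somewhat delicate case analysis of valid and invalid iteration rounds; the paper's version yields essentially the same conclusion but identifies the small denominator more explicitly as a wrap count, information that is not needed for the lemma. Either proof serves the application in part (3) of Theorem \ref{2and4} equally well.
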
 
\begin{proof} ({\bf of the lemma}): 
The proof of the lemma is structured as follows: The set $I+X_t$ 
can be viewed as the union of intervals $2L$, each centered at an element of the form $it$ for $1\le i\le L$. As $i$ varies, the intervals $[it-L,it+L]\subset \Z_p$ appear as intervals `winding around' $\Z_p$ (viewed cyclically). If there are sufficiently many among these that are pairwise disjoint, then their union has a large size. So, if the union does not have large size, then (as we shall show) there is a relatively small multiple of $t$ that lands us very close to zero in $\Z_p$. 

Let us now get to the details. Let $M\in\bN$ be such that  $2Mt<p<(2M+1)t$.  Let $T_0=\emptyset$. For $i\ge 1$, by the 
$i^{th}$ iteration round set we shall refer to the set $T_i$ formed by taking the union of the corresponding set $T_{i-1}$ that 
arises iteratively from the first $i-1$ iterations, and intervals of the `next collection'  of intervals of the form $[jt-L,jt+L]$ 
to the set build thus far till we `go around' $\Z_p$ again. We shall consider this iteration process over $L/200$ rounds. We say 
that the $i^{th}$ iteration is valid if the intervals that are  added to $T_{i-1}$ to form $T_i$ are all pairwise disjoint and 
also disjoint to $T_{i-1}$. In particular, $T_i$ contains the disjoint union of at least $2Mi$ intervals of length $2L$ each.

We now make a few remarks that will outline some 
assumptions/observations that we shall make without loss of generality in the remainder of the proof. 
\begin{enumerate}
\item[a.] Since we shall work with $p$ large, and our definitions concern lower bounds for $|A_*|$ for some appropriate set $A$, we shall instead work with corresponding lower bounds for $|A|$ itself. This makes no significant difference.
\item[b.] Since $\alpha X_{t}=X_{\alpha t}$, it will suffice if we show the following: If $|I+X_t|<\frac{L^2}{200}$ then $t\in S(2L, L/100)_*$.
\item[c.]  Since $I+X_t=\displaystyle\bigcup_{i=-L}^{L} [it-L,it+L]$, each iteration round (by the definition of $M$) will consist of roughly $2M$ intervals. In our analysis of the $i^{th}$ iteration round,  we shall consider the addition of the intervals centered at the elements of the form $2Mit, (2Mi+1)t,\ldots,2M(i+1)t$ instead rather of those centered at the elements $Mit, -Mit, (Mi+1)t,-(Mi+1)t,\ldots,M(i+1)t,-M(i+1)t$. This is because our analysis only depends on the nature of the modular arithmetic in $\Z_p$. Consequently, this makes no difference to the final conclusion, since it simply is equivalent to `centering'  our attention at a different element of $\Z_p$ instead of $0$.
\item[d.] If $t\le 2L$, then clearly, $t\in S(2L,L/100)$, so we may assume that $t>2L$. 
\end{enumerate}

Suppose first  $2Mt+L\ge p-L$; then the first iterate set itself does not admit $2M$ pairwise disjoint intervals. However, the assumption that $t>2L$ implies that the first $2M-1$ intervals are indeed pairwise disjoint, so $|I+X_t|\ge (2M-1)(2L)$, and if $(2M-1)\ge L/400$, then we have  $|I+X_t|\ge L^2/200$ contrary to the assumption. Hence we have $2M<L/400$. Since in $\Z_p$ we have $2Mt\in[-2L,-1]$ our bound on $M$ implies that $t\in S(2L, L/100)$. Now suppose that $2Mt+L< p-L$ and $(2M+1)t-L\le p+L$. In other words, the set $T_1$ is a disjoint union of $2M$ intervals, but the first interval of the $2^{nd}$ iterate set, namely, the interval $[(2M+1)t-L,(2M+1)t+L]$ is not disjoint from the first interval of the first iterate set. Since the first $2M$ intervals are pairwise disjoint, we must have $|I+X_t|\ge (2M)(2L)$. Again,  by the same argument as above, and the fact that $(2M+1)t\in[1,2L]$ in $\Z_p$ implies that $t\in S(2L, L/100)$.
 
The reasoning of the preceding discussion can be extended over several iterates as well. Suppose the first $r-1$ iterations are valid but the $r^{th}$ iteration is not valid; if $2M(r-1)(2L) \ge L^2/200$, then  $|T_{r-1}|\ge L^2/200$ contrary to the hypothesis, so we must have $2M<\frac{L}{400(r-1)}$. Let $0\le s\le r-1$ be such that $(2Mr+s)t<pr<(2Mr+s+1)t$. The following observation is key: Since the $r^{th}$ iteration is not valid, \textit{it is necessarily the case that}  some interval of the form $[(2Mr+r')t-L, (2Mr+r')t+L]$, with $r'\le s$,  intersects some other interval $[(2M\ell +k)t-L,b(2M\ell +k)t+L]$, with $0\le\ell<r$. In particular, it follows that 
$$(2M(r-\ell) +(r'-k))t\in [-2L,2L] \textrm{\ in\ }\Z_p,$$ and by the bound on $2M$ from before, it follows that if $r-1\le L/200$, we have $t\in S(2L, L/100)$. If however, $L/200$ iterations are all valid, then again, we must have 
$$|I+X_t|\ge |T_{L/200}|\ge (2M)\cdot(L/200)\cdot(2L)>L^2/200$$ 
and that contradicts the assumption that $\alpha\in\Z_p^*$ is not good for $t$; hence the iteration rounds cannot all be valid for $L/200$ rounds, and that completes the proof.
\end{proof}

For $t,s\in[1,(p-1)/2]$, since $|X_s+\alpha X_t|=|s(I+\alpha X_{(t/s)})|$ it follows that 
$$|X_s+\alpha X_t|\geq \frac{L^2}{400}\textrm{\ if\ and\ only\ if\ } \alpha\textrm{\  is\ good\ for\ } (t/s).$$


For simplicity, we shall now denote by $S$ the set $S(2L,\frac{L}{100})$. Consider the hypergraph $\Hy$ whose vertex set is $\Z_p^*$ and whose edge set consists of all dilates of $S$, i.e., the edge set of $\Hy= \{xS : x\in \Z_p^*\}$. 

We claim that there exists a positive integer $N=O(1)$ such that  $\Hy$ is not $N$-intersecting, i.e., there exist  $x_1,\ldots, x_N \in \Z_p^*$
such that $$x_1S\cap \cdots \cap x_NS =\emptyset.$$
If the claim holds, then consider the set $$A=\displaystyle\bigcup_{i=1}^{N}X_{x_i^{-1}}\cup I.$$ This set will be our desired $A$. 

Indeed, suppose $\alpha\in\Z_p^*$. Since $\displaystyle\bigcap_0^{N} x_iS=\emptyset$, $\alpha\not\in x_i S$ for some $i$. This in turn (by lemma \ref{small}) implies that  $\alpha$ is good for  $x_i^{-1}$, or equivalently, $|I+\alpha X_{x_i^{-1}}|\ge\frac{L^2}{400}>p^{1/2}$ and since $A+\alpha A\supset I+\alpha X_{x_i^{-1}}$, we are through. So, the proof of the theorem will be complete if the aforementioned claim has been proven.

Note first, that \begin{eqnarray}\bigg|\frac{S}{S}\bigg|\leq |S|^2= L^2/25=\frac{C_0^4}{25}p. \textrm{ \ Consequently, \ } \frac{1}{p}\bigg|\frac{S}{S}\bigg|\leq \frac{C_0^4}{25}. \end{eqnarray}

Suppose $x$ is chosen uniformly at random from $\Z_p^*.$ Let 
$$N(x)= |\{(s_1,s_2): s_i\in S \text{ satisfying } x=\frac{s_1}{s_2}\}|.$$ 
Then
$$\mathbb{E}(N(x)) = \sum_{(s_1,s_2)\in S^2} \mathbb{P}\left(x=\frac{s_1}{s_2}\right) =\frac{|S|^2}{p-1}\leq \frac{C_0^4p}{25(p-1)}< \frac{C_0^4}{12}$$
so, by the Markov Inequality, $$\mathbb{P}(N(x)> C_0^4/6)\leq \frac{1}{2}.$$ 

Let NORMAL := $\{x \in \Z_p^* : N(x) \leq C_0^4/6\}.$ By the probability estimate above,  
$$\mathbb{E}(|\text{NORMAL}|)> \frac{p-1}{2}.$$

The relevant observation regarding  $x\in$ NORMAL is that for such 
$x$, $|S\cap xS| \leq 2C_0^4.$ Indeed, suppose $x\in$ NORMAL . 
If $S\cap xS=\emptyset$ then there is nothing to prove. Let $S\cap xS = \{y_1,\ldots, y_{k}\}$ for some $k$.  Then $y_i= xs_i=s_i'$ for some  $s_i,s_i' \in S$ where $1\leq i\leq k$, and the $s_i$ are all distinct. Hence $x = \frac{s_i'}{s_i}$ for $1\leq i\leq k$, which implies that $x$ admits at least $k$ such expressions as the ratio of two elements of $S$. Since $x\in $ NORMAL, it follows that  $k\leq C_0^4/6.$

Set $N= C_0^4/3$ and pick $x_1 \in$ NORMAL. By the preceding discussion, $|S \cap x_1S|\le C_0^4/6.$ Write $S \cap x_1S=\{a_1,\ldots,a_k\}$. Now pick $x_2\neq x_1$ such that $a_1 \notin x_2S$, and continuing this process, (after having picked $x_1,\ldots,x_{i-1}$), pick $x_i\in\textrm{NORMAL}\setminus\{x_1,\ldots, x_{i-1}\}$ such that $a_{i-1} \notin x_iS.$ These choices are all possible since $|\textrm{NORMAL}|\ge(p-1)/2$, and the number of forbidden choices (at each step of this process) is at most $(C_0^4/3)|S|\le Cp^{1/2}$ for some fixed constant $C>0$, so for large enough $p$, there is always room for choosing such an $x_i$. 

But then we must have $S\bigcap x_1S\bigcap \cdots \bigcap x_NS =\emptyset$ since any element $x$ in the intersection must be $a_i$ for some $i$, but by the choices of the $x_i$, we have $a_i\not\in x_{i+1}S$, and that is a contradiction. This completes the proof of the claim, and the theorem as well. 
\end{enumerate}
 \end{proof}
 {\bf Remark:} We have not made any attempts to even describe the constant $C_0$ explicitly in the proof of the last part of the theorem above,; one could, if one were so inclined, determine a concrete value of $C_0$ for which the theorem works, but we believe that to be somewhat futile since we believe that in reality $\fD(p,4)\le (1+\ep))p^{1/4}$ (for all $\ep>0$; please see the first remark in the next section) this method may not take us anywhere close to the best possible bound.

\section{Concluding Remarks}
\begin{itemize}
\item As we have stated earlier, we strongly believe that in fact $\fD(p,k)=\Theta(p^{1/k})$ for all sufficiently large $p$. But in fact, we are also inclined to believe that in fact $\fD(p,k)\le (1+o(1))p^{1/k}$ though we can prove neither statement now. The best upper bound for $\fD(p,2)$ that we can prove (for all prime $p$) is $(2/\sqrt{3})\sqrt{p}=1.154\ldots\sqrt{p}$. This follows from some results on the existence of differences bases for $[n]$, (see \cite{BanGav}). 
\item One may frame the problem of obtaining an upper bound for $\fD(p,2k-1)$ (in an analogous manner to that in the proof of theorem \ref{2and4}) by constructing a set $A$ such that for any $\alpha_1\ldots,\alpha_k, \beta_1,\ldots,\beta_{k-1}\in\Z_p^*$ such that $$\Z_p^*\subset \frac{A+\alpha_1 A+\cdots+\alpha_k A}{A+\beta_1 A+\cdots+\beta_{k-1}A}.$$ So, for instance, to prove that $\fD(p,3)\le O(p^{1/3})$ amounts to constructing a set of the appropriate size such that $\Z_p^*\subset \frac{A+\alpha A}{A}$. But this asymmetry in the framing makes the problem of $\fD(p,2k)$ easier to approach in this manner.
\item One very natural counterpart to the problem that is the focus of this paper is the corresponding dual problem: For a given finite group $G$, determine 
$$\max\{D_A(G) : |A|=k, A\subset[1,\exp(G)-1]\}.$$ 
For instance, it is known that $D_A(\Z_p)=\lceil p/k\rceil$ if 
$A=\{1,\ldots,k\}$ for  $1\le k\le p-1$ (see \cite{AR},\cite{Adh et al}), so this corresponding maximum is at least $\lceil p/k\rceil$. It turns out, that for $p$ prime, one can show that this maximum is at most $\lceil p/k\rceil$ as follows (this result also appears in \cite{Adetal}, with a different proof): \\
Suppose $A$ is a set of size  $\lceil p/k\rceil$. We shall show that any sequence $\bfx$ of length $\lceil p/k\rceil$, there exist $\bfa\in A\cup \{0\})^k \setminus\{\bfzero_k\}$ such that $\langle \bfx,\bfa\rangle=0$. Write $p = (m-1)k+r$ for $0<r<k$, so that $m= \left\lceil p/k\right\rceil.$ Let $\X:=(x_1\ldots x_m)$ be a sequence of non-zero elements of $\Z_p$, and let $S=A\cup\{0\}$. Consider the polynomial $g(X_1,\cdots,X_m)= \left((\sum_{i=1}^{m}x_iX_i)^{p-1}-1\right)X_1^{k+1-r}$, and let $g(X_1,\cdots,X_m)= \left((\sum_{i=1}^{m}x_iX_i)^{p-1}-1\right)X_1^{k+1-r}$. The coefficient of $\prod_{i=1}^{m}X_i^{k}$ in $g$ equals $\binom{p-1}{r-1,k,\cdots,k}x_1^{r-1}\prod_{i=2}^mx_i^k\neq 0,$ since $x_i\ne 0$, so by the Combinatorial Nullstellensatz (see \cite{Alon}) it follows that there is a choice of $a_i\in S_{i}$ for each $1\le i\le m$ with $a_1\ne 0$ (since $k+1-r>0$) such that $\sum_i a_ix_i=0$. In fact this proof also works even when we assign arbitrary lists of size $\lceil p/k\rceil$ for each non-zero element of $\Z_p$ and we are only allowed to pick coefficients from the corresponding list of each element, so that a corresponding list-weighted version of the Davenport constant also admits the same upper bound. 

The same ideas can be extended to show that if $n=p_1\cdots p_r$ is square-free, and $A$ is a subset of $[1,n-1]$ of size $k$ such that the set $A\pmod{p_i}:=\{a\pmod{p_i}:a\in A\}$ also has size $k$, then any sequence $(x_1\ldots,x_m)$ in $\Z_n$ with $m\ge \frac{\left\lceil p_i/k\right\rceil n}{p_i}$ admits an $A$-weighted zero-sum subsequence. Indeed, suppose $A_i:=A\pmod{p_i}$ has size $k$. Since $n$ is square-free, $\Z_n \cong \Z_{p_i}\times \Z_{n/p_i}.$ Write $p_i= k\lambda_i+ r_i,$ for some $\lambda_i, r_i$ where $0 \leq r_i < k$, so that $\left\lceil{p_i/k}\right\rceil = \lambda_i + 1.$ Let $\X=(x_1,\ldots, x_{m_i})$ be a sequence of elements in $\Z_n$, where $m_i=\frac{n\left\lceil p_i/k\right\rceil}{p_i}.$
Write $x_j= (y_j, z_j)\in  \Z_{p_i}\times \Z_{n/p_i}$  for each $j=1,\ldots,m_i$; similarly, write $a=(a',a'')$ where $a'\in\Z_{p_i}$ and $a''\in\Z_{n/p_i}$. Regroup the sequence $\X$ into $n/p_i$ segments of length $\lceil p_i/k\rceil$ each.  It follows that for each $1\le j\le n/p_i$ and $1\le\ell\le \lceil p_i/k\rceil$, there exist $a'_{\ell, j}\in A_i\cup\{0\}$, not all zero, such that $$\sum_{\ell=(j-1)\lceil p_i/k\rceil+1}^{j\lceil p_i/k\rceil} a'_{\ell,j}y_{\ell}=0$$ in $\Z_{p_i}$.
Writing $t=\lceil p_i/k\rceil$ for convenience, we note that in particular, we have the sequence (from our regrouping)
$$\Y= \left(\left(0,\sum_{\ell=1}^{t}a''_{\ell,1}z_{\ell}\right),\left(0,\sum_{\ell=t+1}^{2t} a''_{\ell,2}z_{\ell}\right),\ldots,\left(0,\sum_{j=(n/p_i-1)t+1}^{nt/p_i}a''_{\ell,(n/p_i-1)}z_{\ell}\right) \right)$$ in $\Z_{p_i}\times \Z_{n/p_i}$ of length $n/p_i$. Note that the first coordinates are all zero by our choices of $a\in A_i\cup\{0\}$. But since $D(\Z_m)=m$, every sequence of length $n/p_i$ in $\Z_{n/p_i}$ admits a non-trivial zero-sum subsequence,  so we are through. An immediate consequence of this is the following: 

For $N:=\max\left\{\left\lceil \frac{p_i}{\sqrt{k}}\right\rceil\frac{n}{p_i}:1\le i\le r\right\},$ any set $A\subset[1,n-1]$ of size $k$, and any $\Z_n$-sequence $\bfx=(x_1,\ldots,x_N)$ of length $N$, there exists $\bfa\in(A\cup \{0\})^k \setminus\{\bfzero_k\}$ such that $\langle \bfx,\bfa\rangle=0$, so in particular, 
$$\max\{D_A(\Z_n):|A|=k\}\le \max\left\{\left\lceil \frac{p_i}{\sqrt{k}}\right\rceil\frac{n}{p_i}:1\le i\le r\right\}. $$

\end{itemize}

\section*{Acknowledgments}
The second author's research partially supported by NSFC with grant no. 11681217.

\end{document}